\documentclass[10pt]{article}
\usepackage{amsmath,amssymb,amsthm}

\title{Complexity growth of a typical triangular billiard is weakly exponential}
\author{Dmitri Scheglov\\
Fluminense Federal University}

\theoremstyle{plain}
\newtheorem{Lemma}{Lemma}[section]
\newtheorem{Theorem}{Theorem}[section]
\newtheorem{Corollary}{Corollary}[section]
\newtheorem{thm}{Theorem}
\newtheorem{conj}{Conjecture}
\begin{document}

\maketitle
\setlength{\parindent}{0pt}

\begin{abstract}
\noindent
We prove that for any $\epsilon>0$ the growth rate $P_n$ of generalized diagonals of a typical (in the Lebesgue measure sense) triangular billiard satisfies $P_n<Ce^{n^{\epsilon}}$. This makes further progress towards a solution of problem 3 in Katok's list ``Five most resistant problems in dynamics".
\end{abstract}

\section{Overview}

A generalized diagonal of a polygonal billiard is an orbit which connects two vertices. One of the characteristics of billiard dynamics is a complexity function $P_n$, which is a number of generalized diagonals of length no greater than $n$. Here by length we mean the number of reflections. There are also other versions of billiard complexity, such as position complexity, directional complexity and word complexity which are closely related to our definition( see [1], [2], [3], [4]).
\

\

Katok [7] proved the subexponential estimate.

\begin{thm}[Katok, 1987] For any polygon: $\lim\frac{\ln(P_n)}{n}=0$.
\end{thm}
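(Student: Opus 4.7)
The plan is to reduce the estimate on $P_n$ to a bound on the word complexity $c_n$, defined as the number of distinct combinatorial codings (ordered sequences of sides visited) of length $n$ that are realized by some billiard trajectory. A generalized diagonal of combinatorial length $n$ is determined by its coding together with a choice of initial vertex, so $P_n \leq V\cdot c_n$, where $V$ is the number of vertices; it therefore suffices to show $\lim_{n\to\infty}\frac{\ln c_n}{n}=0$.

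To bound $c_n$, I would unfold the billiard, so that each trajectory becomes a straight-line segment in a planar cover. On the phase space $M$ of inward-pointing unit tangent vectors based at boundary points, the sets
\[
R_w=\{(p,\theta)\in M:\text{the coding of the trajectory through }(p,\theta)\text{ begins with }w\}
\]
form a partition indexed by the realized words $w$ of length $n$. Each $R_w$ is cut out by finitely many affine inequalities in $(p,\theta)$ (the unfolded trajectory must meet the $k$-th unfolded side in the image of $w_k$), hence each $R_w$ is convex, and $c_n$ equals the number of pieces in this convex partition.

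The partition is controlled by its singular boundary $\Sigma_n\subset M$, consisting of initial conditions whose unfolded trajectory passes through a vertex of the unfolded tessellation within the first $n$ steps. The decisive analytic step is to bound the one-dimensional measure of $\Sigma_n$ subexponentially. Here one exploits the fact that the billiard flow in a polygon has zero Lyapunov exponent: parallel trajectories remain parallel, so the first-return map is an isometry along the flow direction, and the partition cannot become exponentially wiggled with $n$. This should force each of the singular curves composing $\Sigma_n$ to have length growing at most polynomially in $n$. Combined with convexity of the $R_w$ via the planar estimate that $N$ convex pieces of total boundary $1$-measure $L$ inside a bounded domain satisfy $N=O(L^2)$, this yields a polynomial, and in particular subexponential, bound on $c_n$.

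The step I expect to be the main obstacle is the length bound on $\Sigma_n$. Each singular curve is a smooth arc in $M$ traced by initial conditions whose $k$-th reflection targets a prescribed vertex, and in principle these arcs can wind and accumulate, especially near directions for which the unfolded cover develops very long thin strips before closing near a vertex. Turning the absence of exponential divergence into a quantitative length bound that is uniform over all vertices and all $k \leq n$ is the technical heart of the argument, and is the part I would need to treat most carefully.
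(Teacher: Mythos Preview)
The paper does not prove this statement: Theorem~1 is quoted from Katok's 1987 paper purely as background, with no argument supplied. So there is nothing in the present paper to compare against; one can only assess your proposal on its own and against Katok's original proof.

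Your outline has a structural gap exactly where you flag the obstacle. In suitable coordinates (e.g.\ $(p,u)$ with $u=\cot\theta$; the conditions are \emph{not} affine in $(p,\theta)$ as you wrote, but they are in $(p,u)$) the set $\Sigma_n$ is a finite union of straight segments, each of length at most the diameter of the phase space. Hence the total $1$-measure of $\Sigma_n$ is, up to a constant, the \emph{number} of such segments. That number is essentially $\sum_{k\le n}c_k$: for every realized word of length $k-1$ and every vertex of the $k$-th unfolded copy you get one segment. Bounding $|\Sigma_n|$ is therefore equivalent to bounding the complexity itself, and the scheme is circular. Worse, the \emph{polynomial} bound on $|\Sigma_n|$ you aim for would yield polynomial complexity, which is precisely Katok's conjecture --- open even for triangles, and the very target the surrounding paper is inching toward. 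The heuristic ``zero Lyapunov exponents force the singular curves to grow only polynomially'' misreads the mechanism: individual singular curves do not stretch under iteration (each stays of bounded length); what grows is their \emph{number}, and vanishing exponents say nothing about that.

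Katok's actual proof bypasses any curve count via entropy. From the unfolding picture nearby orbits separate at most linearly, so all Lyapunov exponents vanish for every invariant measure; by Ruelle's inequality (in the Katok--Strelcyn form for maps with singularities) every invariant measure has zero metric entropy; by the variational principle the topological entropy of the billiard map is zero. Katok then shows that the exponential growth rate of generalized diagonals is bounded by the topological entropy, yielding $\lim\frac{1}{n}\log P_n=0$ with no explicit rate. It is this soft ergodic-theoretic detour, not a direct geometric count, that makes the subexponential statement accessible while leaving any quantitative improvement --- the business of the present paper --- genuinely hard.
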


Masur [9], [10] using Teichmuller theory proved quadratic estimates for any \textsl{rational-angled} polygon:
\

\begin{thm}[Masur, 1990] For any polygon with angles in $\pi\mathbb{Q}$  there are constants $C_1, C_2>0$ such that: $C_1\cdot n^2<P_n<C_2\cdot n^2$.
\end{thm}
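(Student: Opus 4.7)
The plan is to reduce the problem to counting saddle connections on a translation surface via the Zemlyakov--Katok unfolding construction. Since the angles of the polygon $P$ lie in $\pi\mathbb{Q}$, the group $G$ generated by the linear parts of reflections in the sides of $P$ is a finite dihedral group. Gluing one copy of $P$ for each element of $G$ along identified sides produces a closed translation surface $X=X(P)$ of finite area, with conical singularities located above the vertices of $P$. Generalized diagonals of the billiard in $P$ correspond bijectively, up to the action of $G$, to saddle connections on $X$; moreover, a straight segment of Euclidean length $L$ on $X$ crosses $\Theta(L)$ copies of $P$, so the reflection count $n$ of a diagonal and the Euclidean length $L$ of its lifted saddle connection are comparable, $L=\Theta(n)$, with constants depending only on $P$.

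This reduces the theorem to the two-sided bound $C_1 L^2 \leq N(X,L) \leq C_2 L^2$, where $N(X,L)$ is the number of saddle connections on $X$ of Euclidean length at most $L$. For the upper bound I would invoke the Masur counting theorem for saddle connections on a translation surface of finite area. The proof passes from the single surface $X$ to an average over the $SL(2,\mathbb{R})$-orbit closure of $X$ inside the appropriate stratum of the moduli space of Abelian differentials, using the Siegel--Veech transform. Integrability of this transform with respect to the Masur--Veech measure, together with ergodicity of the Teichm\"uller geodesic flow, yields the pointwise estimate $N(X,L)=O(L^2)$.

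For the lower bound I would exploit the fact that $X$ admits cylinder decompositions in a dense set of directions. A cylinder of circumference $w$ in a periodic direction contributes $\lfloor L/w \rfloor$ parallel saddle connections of length at most $L$ along its core curve. Rationality of $P$ guarantees, via the translation cover, a rich supply of periodic directions with controlled widths; summing the contributions of cylinders coming from, for instance, a Veech-group orbit of one starting direction produces at least $C_1 L^2$ distinct saddle connections of length $\leq L$.

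The main obstacle is clearly the sharp upper bound. The unfolding and the comparison between reflection count and Euclidean length are essentially combinatorial, and the lower bound only needs enough explicit periodic directions. The upper bound, by contrast, requires genuine Teichm\"uller-theoretic input: one must establish $L^1$-integrability of the Siegel--Veech transform on the stratum containing $X$, and then transfer the resulting spatial average to a pointwise bound on $X$ via ergodic averaging along the Teichm\"uller flow. This analytic step has no elementary substitute, and it is exactly what promotes Katok's subexponential bound to the sharp quadratic bound in the rational-angled case.
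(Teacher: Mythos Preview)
The paper does not prove this theorem. It is quoted in the overview as a known result of Masur, with citations to references [9] and [10], and is used only as background motivating Katok's conjecture on irrational polygons. There is therefore no proof in the paper to compare your proposal against.

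That said, your outline is a reasonable high-level summary of the translation-surface approach, with two caveats worth noting. First, the upper-bound mechanism you describe (Siegel--Veech transform, Masur--Veech measure, ergodicity of the Teichm\"uller flow) is closer to the later Veech/Eskin--Masur formalism than to Masur's original 1990 argument; more importantly, an ergodic-averaging argument of the kind you sketch naturally yields an almost-everywhere statement on the stratum, whereas here you need the bound for the one specific surface $X(P)$. Masur's 1990 proof gives the quadratic upper bound \emph{pointwise} for every translation surface, which is what is required. Second, your lower-bound sketch via Veech-group orbits of periodic directions implicitly assumes more structure (essentially a lattice Veech group) than a generic rational polygon possesses; Masur's 1988 lower bound again holds for every translation surface and does not rely on such symmetry. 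Neither issue is fatal to the overall strategy, but both steps would need to be replaced by the pointwise arguments actually in Masur's papers.
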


Since the local geometry of polygonal billiards is similar and the result of Masur gives quadratic growth in the rational case, Katok formulated the following conjecture, which he included in his list ``Five most resistant problems in dynamics''[6]:
\

\begin{conj}[Katok] For any polygon and any $\epsilon>0$: $P_n<Cn^{2+\epsilon}$.
\end{conj}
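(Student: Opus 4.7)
The plan is to attack Katok's conjecture through a quantitative rational approximation of the polygon, parametrised by the denominator $q$ of the approximating angles. Given a polygon $P$ with angles $\alpha_1,\dots,\alpha_k$, I would choose (by Dirichlet) rational multiples $\alpha_i^{(q)}\in\pi\mathbb{Q}$ with $|\alpha_i-\alpha_i^{(q)}|\le 1/q$, assembling a rational-angled polygon $P^{(q)}$ close to $P$. Unfolding a generalized diagonal of $P$ of combinatorial length $n$ gives a straight segment in the plane; unfolding the \emph{same} reflection sequence inside $P^{(q)}$ gives another segment whose direction differs by $O(n/q)$ and whose endpoint differs spatially by $O(n^2/q)$. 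For $q=q(n)$ chosen polynomially large in $n$, one would like every generalized diagonal of $P$ of length $\le n$ to inject into the set of saddle connections on the translation surface $X^{(q)}$ associated with $P^{(q)}$; Masur's quadratic theorem would then provide the count $C(X^{(q)})\cdot n^2$ on the target side.

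Two technical ingredients make this comparison precise. First, a \emph{vertex-stability lemma}: an $O(n^2/q)$ spatial drift of the unfolded endpoint, together with a non-degeneracy hypothesis on the vertex spacing of $P$, should force the corresponding unfolded trajectory in $P^{(q)}$ to land on the image of the same vertex, provided $q$ is polynomially larger than $n$. Second, an \emph{injectivity estimate}: since the combinatorial sequence of reflections together with an initial vertex determines the unfolded direction, distinct generalized diagonals of $P$ produce distinct saddle connections of $X^{(q)}$. Together these would yield $P_n(P)\le C(X^{(q(n))})\cdot n^2$, and the conjecture would reduce to controlling $C(X^{(q)})$ as $q\to\infty$.

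The principal obstacle is precisely this constant. The surface $X^{(q)}$ lies in strata whose dimension grows with $q$; its genus is of order $q$ when the $\alpha_i^{(q)}$ share denominator $q$, and Masur's original argument gives no stratum-uniform constant, because both the length of the shortest saddle connection and the effective rate in the lattice-point count on moduli space degrade with the stratum. A polynomial choice $q=n^{A}$ then produces only a bound of the form $P_n\le n^{2+AB}$ with some exponent $B>0$, which falls short of $n^{2+\epsilon}$ for $\epsilon$ arbitrarily small. To close the gap one appears to need a quantitative Masur-type theorem with at most subpolynomial dependence on the stratum, or an entirely different route bypassing approximation altogether. This is the essential difficulty that has kept the conjecture open; the weakly exponential estimate established in the present paper is what emerges when one is willing to let $q$ grow almost exponentially and to use typicality in Lebesgue measure to tame the stratum constant along the approximating sequence.
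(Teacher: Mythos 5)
The statement you were asked about is not a theorem of the paper at all: it is Katok's conjecture, quoted by the author precisely because it is open (problem 3 on Katok's list), and the paper proves only the far weaker bound $P_n<Ce^{n^{\epsilon}}$ for Lebesgue-typical triangles. So there is no proof in the paper to match yours against, and your text, as you yourself concede in the final paragraph, is not a proof either but a strategy sketch that stalls at the decisive step. The gap is concrete: in the comparison $P_n(P)\le C(X^{(q(n))})\cdot n^2$ everything hinges on the constant in Masur's quadratic theorem, which depends on the translation surface. When the angles are approximated with common denominator $q$, the genus and the stratum of $X^{(q)}$ grow with $q$, the shortest saddle connection shrinks, and no known version of Masur's count gives a bound on $C(X^{(q)})$ that is uniform, or even polynomial with a controlled exponent, in $q$. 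Since your scheme forces $q$ to grow at least polynomially in $n$ (and realistically much faster, see below), the method yields no exponent $2+\epsilon$, and without a stratum-uniform quantitative Masur theorem the argument cannot be completed. Declaring that the needed ingredient ``appears to be needed'' is an accurate diagnosis of why the conjecture is open, not a step toward closing it.

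There is also a second, more structural flaw in the transfer step that you pass over with the ``vertex-stability lemma.'' To inject a generalized diagonal of $P$ into a saddle connection of $P^{(q)}$ with the same combinatorics, the perturbed unfolded segment must stay inside the unfolded corridor of triangles, i.e.\ the original orbit must clear every intermediate vertex by a margin exceeding the $O(n^2/q)$ drift. But generalized diagonals of length $\le n$ can pass arbitrarily close to vertices, and there is no a priori lower bound on this clearance that is polynomial in $1/n$; obtaining quantitative control on how close orbits of bounded length approach vertices is exactly the hard point, and it is what the paper attacks for almost every triangle via lower bounds on trigonometric polynomials with integer coefficients (the Kaloshin--Rodnianski input in Lemma 4.1), at the price of bounds like $|I|>e^{-an^2}$, i.e.\ exponentially small, not polynomially small. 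This is why the paper's mechanism is a bootstrap on subexponential complexity classes for typical triangles rather than a reduction to the rational case, and why your route, taken literally, would force $q$ to be exponentially large in $n$, destroying any hope of a polynomial count on the rational side.
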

\

However even though the growth of $P_n$ is conjecturally just more than quadratic, even explicit subexponential upper bound was an open problem for a long time, still not fully resolved. The key problem here is a luck of structure of irrational polygonal billiard as unlike the rational case it is not equivalent to the geodesic flow on the flat surface with singularities, so one can not use tools from Teichmuller theory.
 \

 \

 Recently we provided an explicit subexponential estimate for a full measure set of triangular billiards[11]:
 \begin{thm}[S, 2012] For a typical triangle and any $\epsilon>0$ there is a constant $C>0$ such that: $P_n<Ce^{n^{\sqrt{3}-1+\epsilon}}$.
\end{thm}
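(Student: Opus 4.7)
My plan is to couple the irrational triangle $T$ to rational approximants at carefully chosen scales, invoke Masur's quadratic bound on each approximant, and control the combinatorial drift via a Diophantine condition that holds on a full-measure set of angle parameters. Let $T$ have angles $\pi\alpha_1,\pi\alpha_2,\pi\alpha_3$. For typical $(\alpha_1,\alpha_2)$, a Borel--Cantelli argument yields the standard estimate $\|q\alpha_i\| \geq q^{-1-\delta}$ for every $\delta>0$ and all sufficiently large $q$, and this is the only arithmetic input I will use.

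Given a scale $q$, I will replace $T$ by the rational triangle $T_q$ whose angles are denominator-$q$ approximations of those of $T$; the unfolding $X_q$ is a translation surface on which Masur's theorem provides $P_n(T_q) \leq C(q)\,n^2$, with $C(q)$ controlled by the genus of $X_q$, itself polynomial in $q$. The key observation is that an orbit of $T$ of combinatorial length $n$ differs from its $T_q$-counterpart by an accumulated angular drift of order $n q^{-1-\delta}$, and this deviation is harmless only while it is smaller than the minimal angular gap between the unfolded vertex images that the orbit sees, a scale that itself shrinks polynomially in $n$. Orbits for which the drift exceeds the gap must be accounted for by a finer approximant $T_{q'}$, and iterating across a sequence of scales $q_0 < q_1 < \cdots < q_k$ produces a product bound
\[
P_n(T) \leq \prod_{j=0}^{k} \bigl(C(q_j)\, n^{2}\bigr)^{\kappa_j},
\]
where $\kappa_j$ counts the orbits handled at scale $q_j$. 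Balancing the two competing loss mechanisms---polynomial growth of $C(q)$ and the drift-induced cutoff $n/q^{1-\delta}$---leads to an optimization whose solution gives the exponent $\sqrt{3}-1$, and substituting back yields $P_n < Ce^{n^{\sqrt{3}-1+\epsilon}}$.

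The most delicate step will be the scale decomposition itself: I must show that orbits destroyed at scale $q_j$ are genuinely and uniquely recaptured at scale $q_{j+1}$, with no double counting and without the product of polynomial factors $C(q_j)$ inflating catastrophically. Doing this cleanly demands a geometric analysis of thin tubes in the unfolded plane together with a uniform lower bound on angular gaps between distinct vertex images, and these gap estimates depend on the \emph{joint} Diophantine behaviour of all three angles rather than of each separately, so the constraint $\alpha_1+\alpha_2+\alpha_3=1$ must be built into the measure-theoretic Borel--Cantelli arguments from the outset. This multi-parameter Diophantine book-keeping, rather than the rational approximation or Masur's estimate, is where I expect the real work of the proof to lie.
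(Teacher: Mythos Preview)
Your proposal takes a route that has essentially nothing in common with the approach summarized in the paper (the full proof of this theorem is in the cited reference [11]; the present paper only sketches it). The method there does \emph{not} pass through rational approximations or Masur's theorem at all. Instead one introduces indexed partitions of the angular interval at a vertex by generalized diagonals, and argues by contradiction: if $P_n$ grows too fast, then a pigeonhole argument forces the existence of many short billiard trajectories that come very close to a vertex at \emph{three} distinct time moments. The coordinates of the relevant unfolded vertices are trigonometric polynomials in the angles $\alpha,\beta$ with integer coefficients and degree $O(n)$, so this closeness forces a family of such polynomials to take values smaller than $e^{-Rn^2}$. The Kaloshin--Rodnianski estimate then shows this is impossible on a full-measure set of $(\alpha,\beta)$. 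The exponent $\sqrt{3}-1$ arises from balancing the pigeonhole count against the Kaloshin--Rodnianski threshold, not from any optimization over approximation scales.

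Beyond being a different strategy, your outline has a genuine gap that I do not see how to close. The entire scheme rests on the assertion that Masur's upper-bound constant $C(q)$ for the rational triangle $T_q$ is polynomial in $q$. This is not known. Masur's proof of $P_n \le C\, n^2$ is non-effective: the constant comes from compactness arguments in Teichm\"uller space and there is no uniform control of $C$ in terms of the genus of the unfolded surface, which for a denominator-$q$ triangle grows linearly in $q$. Without a polynomial (indeed, without any sub-exponential) bound on $C(q)$, the product $\prod_j (C(q_j)\,n^2)^{\kappa_j}$ is uncontrolled and the optimization you describe cannot even be set up, let alone yield $\sqrt{3}-1$. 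The standard one-variable Diophantine input $\|q\alpha_i\| \ge q^{-1-\delta}$ is also too weak here: the quantities that actually govern vertex separations in the unfolding are integer combinations $p\alpha + q\beta$, and the relevant lower bounds are of Kaloshin--Rodnianski type, giving $e^{-Rm^2}$ rather than inverse-polynomial decay. That quadratic exponent in $m$ is precisely what drives the $\sqrt{3}-1$ in the actual proof.
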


In the proof of theorem 3 we introduced a natural technic of indexed partitions for dealing with generalized diagonals. The key idea was to show that if there are too many generalized diagonals then by some kind of a pigeonhole principle there is a sequence of relatively short billiard trajectories which stay too close to one of the vertices at three different time moments. The last fact implies that some sequence of trigonometric polynomials must take very small values which is not possible on the set of full measure by the result of Kaloshin and Rodnianski[5].
\

\

In the current paper we again use indexed partitions as a convenient framework, however we will rely on quite different geometric ideas which require more delicate analysis of billiard dynamics. We will still need an abstract result by Kaloshin and Rodnianski about trigonometric polynomials however it will not play one of the main roles as it did in our previous paper. From this point of view we now use more dynamical arguments rather than abstract properties of trigonometric polynomials. The aim of the paper is to prove the following theorem:
\

\begin{thm}[Weakly exponential estimate] For a typical triangle and any $\epsilon>0$ there is a constant $C>0$ such that: $P_n<Ce^{n^{\epsilon}}$.
\end{thm}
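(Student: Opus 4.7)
The plan is to push the indexed-partition framework introduced in the proof of Theorem 3 further, by arranging the pigeonhole mechanism to extract arbitrarily many, rather than merely three, close vertex encounters along a single short orbit. Fix $\epsilon>0$, choose an integer $k\approx 1/\epsilon$, and suppose for contradiction that $P_n\ge e^{n^{\epsilon}}$ on a positive Lebesgue-measure set of triangles and for infinitely many $n$. A generalized diagonal of combinatorial length $n$ is determined by its symbolic code, that is, the sequence of sides hit. Partition the code into $k$ equal subwords of length $n/k$ and apply the pigeonhole principle: there is a fixed sequence of $k$ combinatorial segment-types simultaneously realized by a block of at least $e^{n^{\epsilon}}/B^{k}$ distinct diagonals, where $B$ denotes the constant coming from Katok's subexponential bound applied to each segment.

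The crucial geometric step is to upgrade this combinatorial clustering to a quantitative statement about a single auxiliary billiard orbit. A pencil of parallel trajectories whose centerline avoids all vertices by width at least $w$ preserves that width across a segment of fixed combinatorial type; hence any cluster containing more than $1/w$ diagonals in the same type must collapse the pencil, which can happen only when a vertex enters the pencil transversally. Iterating this observation across the $k$ segments of the shared code produces a single unfolded trajectory of length $O(kn)$ that passes within distance $e^{-c n^{\epsilon}}$ of a vertex at $k$ distinct time moments. This is where the ``more delicate analysis of billiard dynamics'' promised in the introduction is needed, since in Theorem 3 only three such encounters were extracted.

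Each near-vertex recurrence translates into a bound of the form $|T_i(\alpha,\beta)|<e^{-c n^{\epsilon}}$ for a trigonometric polynomial $T_i$ in the two free angle parameters $(\alpha,\beta)$ of the triangle, whose degree is bounded linearly by the orbit length between consecutive events. The Kaloshin--Rodnianski estimate on sublevel sets of trigonometric polynomials then applies: feeding in the simultaneous smallness of $k$ approximately independent $T_i$ gives a measure bound on the exceptional parameter set that is superpolynomially small in $n$, and summing these bounds over $n$ via Borel--Cantelli shows that the set of triangles where this scenario occurs infinitely often has Lebesgue measure zero. Taking $k=\lceil 1/\epsilon\rceil$ produces the advertised estimate $P_n<Ce^{n^{\epsilon}}$ on a full-measure set of parameters; since $\epsilon$ is arbitrary, the theorem follows.

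The principal obstacle is the geometric propagation step: one must carry the pencil-coherence estimate through $k$ consecutive long combinatorial segments without accumulating a multiplicative loss that grows exponentially in $k$, because any such loss would cap the achievable exponent and block the passage $\epsilon\to 0$. Equivalently, one has to certify that combinatorial overlap of many diagonals produces quantitative, rather than merely qualitative, vertex-proximity, uniformly in the number $k$ of segments. Securing this uniform control, and verifying that the $T_i$ extracted at different near-vertex times are sufficiently independent for the Kaloshin--Rodnianski bound to compound productively, is the main new dynamical content of the argument beyond what sufficed for the exponent $\sqrt{3}-1+\epsilon$ in Theorem 3.
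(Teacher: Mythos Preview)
Your proposal is not a proof but an outline of a strategy, and you yourself name the gap: the ``pencil-coherence'' step across $k$ segments and the claimed productive compounding of $k$ Kaloshin--Rodnianski constraints are asserted, not established. These are not technicalities. The trigonometric polynomials $T_i$ attached to successive near-vertex events along one orbit share the same two angle variables and are highly correlated; there is no a priori reason their sublevel sets intersect with measure comparable to a product, so the phrase ``sufficiently independent'' carries the entire weight of the argument and is left open. Likewise, the quantitative passage from ``many diagonals share a $k$-block code'' to ``one orbit comes $e^{-cn^{\epsilon}}$-close to a vertex $k$ times'' is exactly what limited the earlier method to the exponent $\sqrt{3}-1$; you have restated the difficulty rather than overcome it.

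The paper takes a genuinely different route that sidesteps both problems. Instead of extracting $k$ simultaneous vertex encounters in one shot, it runs a \emph{bootstrap}: assuming $P_n<e^{n^{\nu}}$, it produces the improved bound $P_n<e^{n^{\gamma(\nu)}}$ with $\gamma(\nu)=\tfrac{1}{2}(-\nu^{2}+\sqrt{\nu^{4}+4\nu^{2}})<\nu$, and then iterates $\gamma$ down to any $\epsilon>0$. The single geometric step per iteration is the observation (Lemma~3.3) that two nearby generalized diagonals of length $\sim n$ have endpoints joined by a \emph{short} generalized diagonal of length at most $c\ll n$; a pigeonhole on exit positions and angles shows these short connecting diagonals are pairwise distinct, so their number is at most $P_c$, which is controlled by the \emph{assumed} bound at the shorter scale. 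Kaloshin--Rodnianski enters only once, and only to guarantee the a priori lower bound $|I|>e^{-an^{2}}$ on partition intervals (Lemma~4.1); it is not used to compound multiple constraints. Thus the obstacle you identify---loss accumulating with $k$---never arises, because each bootstrap step involves just one connecting-diagonal extraction and the iteration is in the exponent $\nu$, not in the number of encounters.
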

\section{Interval partitions}

We consider a triangle, a fixed vertex and the corresponding angular segment located at the vertex, which we naturally associate with an interval $I\subseteq [0, \pi]$ using the angular distance on it. Points on the interval then correspond to rays emanating from the vertex. We introduce a useful reduced quantity $Q_n$ as a number of generalized diagonals emanating from the vertex of length no greater than $n$.
Now let us create a decreasing sequence of finite indexed partitions $\xi_n $ of $I$ on subintervals as follows. $\xi_0=I$ is a trivial partition with one element. Cutting points of partition $\xi_n$ are those corresponding to the generalized diagonals of length no greater than $n$.
Two observations immediately follow from this construction:
\

\

$1)$ Inside each interval of the partition $\xi_n$ there is at most one point of the partition $\xi_{n+1}$.
\

$2)$ The number $Q_n$ is exactly the number of cutting points of $\xi_n$ and each cutting point has an index, namely the length of the corresponding generalized diagonal.

\begin{Lemma} Let $\xi_n$, $\xi_{n+1}$, $\ldots$ , $\xi_{n+c}$ be a finite sequence of indexed partitions such that $Q_{n+c}>2Q_n+1$. Then there are at least
$Q_{n+c}-2Q_n-1$ intervals of the partition $\xi_{n+c}$ such that the indices of their endpoints belong to the interval $[n+1,n+c]$.
\end{Lemma}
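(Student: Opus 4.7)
The plan is to do a direct counting argument on endpoints of the intervals of $\xi_{n+c}$. The partition $\xi_{n+c}$ has exactly $Q_{n+c}$ cutting points in the interior of $I$ and hence consists of $Q_{n+c}+1$ subintervals. Call an interval of $\xi_{n+c}$ \emph{good} if both of its endpoints have indices in $[n+1, n+c]$, and \emph{bad} otherwise. I want to show that the number of bad intervals is at most $2Q_n+2$.

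A bad interval must have at least one endpoint which is either (a) one of the two boundary points of $I$ itself (which carry no index), or (b) a cutting point whose index is at most $n$, i.e., a cutting point already present in $\xi_n$. By construction, there are exactly $Q_n$ cutting points of type (b) and $2$ points of type (a), giving $Q_n+2$ ``old'' endpoints in total. Each interior point of $I$ can be an endpoint of at most $2$ intervals of $\xi_{n+c}$, while each of the two boundary points of $I$ is an endpoint of exactly one interval of $\xi_{n+c}$. Consequently the total number of intervals of $\xi_{n+c}$ incident to at least one ``old'' endpoint is at most $2Q_n + 2\cdot 1 = 2Q_n+2$.

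Subtracting from the total count of intervals, the number of good intervals is at least
\[
(Q_{n+c}+1)-(2Q_n+2) = Q_{n+c} - 2Q_n - 1,
\]
which under the hypothesis $Q_{n+c}>2Q_n+1$ is positive, giving the conclusion.

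The argument is essentially bookkeeping, so there is no single ``hard step'' — the one point that requires a little care is the correct accounting of the two boundary points of $I$, which contribute only one interval each rather than two; miscounting them would shift the constant $-1$ in the statement. The lemma is set up as a purely combinatorial consequence of the refinement property (observation $1$ in the previous paragraph plays no role here — it will matter only later, when one needs to control how the good intervals sit inside intervals of coarser partitions).
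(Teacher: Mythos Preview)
Your proof is correct. It is also a somewhat different, and in fact cleaner, argument than the one in the paper. The paper works from the coarser side: it classifies the intervals of $\xi_n$ into sets $X$, $Y$, $Z$ according to whether they contain $0$, $1$, or $\geq 2$ new cutting points of $\xi_{n+c}$, observes that an interval $I_i\in Z$ containing $S_i$ new points contributes at least $S_i-1$ good subintervals, and then solves the resulting system $x+y+z=Q_n+1$, $y+\sum S_i+Q_n=Q_{n+c}$ to get $\sum(S_i-1)=Q_{n+c}-2Q_n-1+x\geq Q_{n+c}-2Q_n-1$. You instead work directly at the level of $\xi_{n+c}$: there are $Q_{n+c}+1$ intervals in total, and at most $2Q_n+2$ of them can be incident to one of the $Q_n+2$ ``old'' endpoints, so the remainder are good. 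Both arguments are pure bookkeeping and yield the identical constant; the paper's version has the minor by-product of localizing the good intervals inside the $Z$-intervals of $\xi_n$, but this extra information is not used anywhere later. Your remark that observation~$1$ is irrelevant here is also accurate---the paper's proof does not use it either.
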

\begin{proof}
Let us consider intervals of the partition $\xi_n$ and the cutting points of the partition $\xi_{n+c}$ inside these intervals. Let $X$ be the set of intervals of $\xi_n$ which have no points of $\xi_{n+c}$ inside, $Y$ be the set of intervals of $\xi_n$ which have 1 point of $\xi_{n+c}$ inside and $Z$ be the set of intervals which have 2 and more points of $\xi_{n+c}$ inside. Let us denote the cardinalities of these sets as $|X|=x$, $|Y|=y$, $|Z|=z$. We immediately have: $x+y+z=Q_n+1$.
\

\

Let us now consider a fixed interval $I_i\in Z$, where $i$ belongs to the index set, parameterizing $Z$. If $S_i$ is a number of points of $\xi_{n+c}$ inside $I_i$
then we have at least $N_i=S_i-1$ intervals of $\xi_{n+c}$ with indices in the interval $[n+1, n+c]$ located inside $I_i$. Then all the intervals from $Z$ contain at least $N= \sum N_i=\sum S_i -z$ intervals of $\xi_{n+c}$ with the required property.
\

\

Counting all the points of $\xi_{n+c}$ we have: $y+\sum S_i+ Q_n=Q_{n+c}$. As we noted that $x+y+z=Q_n+1$ then by subtracting we obtain:
$\sum S_i-z=Q_{n+c}-2Q_n-1+x$ so $N\geq Q_{n+c}-2Q_n-1$.
\end{proof}
\section{Combinatorial geometry of orbits}
\subsection{ Unfolding of a billiard trajectory}
We would like to remind of a useful unfolding construction associated to any polygonal billiard [8].
We fix a polygon on the plane and consider a time moment when a particular billiard orbit hits a polygon side. Then instead of reflecting the orbit we continue it as a straight line and reflect the polygon along the line.
As we continue this process indefinitely the sequence of polygons obtained this way is called unfolding of the polygon along the orbit.
The figure below illustrates the unfolding of a triangle along an orbit.
\

\unitlength 1mm 
\linethickness{0.4pt}
\ifx\plotpoint\undefined\newsavebox{\plotpoint}\fi 
\begin{picture}(55.066,43.929)(0,0)
\thicklines
\multiput(9.068,3.568)(.0337049505,.0599009901){505}{\line(0,1){.0599009901}}
\multiput(9.068,3.568)(.033717073,.133785366){205}{\line(0,1){.133785366}}
\multiput(9.068,39.542)(.033717073,-.041697561){205}{\line(0,-1){.041697561}}
\put(9.068,3.642){\line(0,1){35.974}}
\multiput(9.068,3.568)(-.033716495,.135242268){194}{\line(0,1){.135242268}}
\multiput(2.5,29.688)(.03365641,.050317949){195}{\line(0,1){.050317949}}
\multiput(15.977,31.008)(.12457799,.03373987){81}{\line(1,0){.12457799}}
\multiput(25.963,33.741)(.03344494,-.17040995){66}{\line(0,-1){.17040995}}
\multiput(28.17,22.494)(-.0344693836,-.0337118147){555}{\line(-1,0){.0344693836}}
\multiput(25.898,33.676)(.0381101812,-.0336974234){661}{\line(1,0){.0381101812}}
\multiput(51.089,11.402)(-.0691622931,.033646521){331}{\line(-1,0){.0691622931}}
\put(25.986,33.676){\line(1,0){11.049}}
\multiput(37.035,33.676)(.0337329529,-.0533749254){414}{\line(0,-1){.0533749254}}
\multiput(37.035,33.588)(.03339135,.11490554){90}{\line(0,1){.11490554}}
\multiput(40.04,43.929)(.0337237559,-.0995394732){325}{\line(0,-1){.0995394732}}

\put(55.066,25.014){\vector(1,0){.07}}\put(3.801,25.102){\line(1,0){51.2655}}
\end{picture}

Fig 1. Triangle unfolding
\

\

For a given triangle the shape obtained from a triangle by reflection about one side is called a \textsl{kite}. It is clear that for any triangle unfolding there is an associated kite unfolding. We will use both unfoldings having in mind the natural correspondence between them.
The next figure illustrates the corresponding kite unfolding.
\

\unitlength 1mm 
\linethickness{0.4pt}
\ifx\plotpoint\undefined\newsavebox{\plotpoint}\fi 
\begin{picture}(55.066,43.929)(0,0)
\thicklines
\multiput(9.068,3.568)(.033717073,.133785366){205}{\line(0,1){.133785366}}
\multiput(9.068,39.542)(.033717073,-.041697561){205}{\line(0,-1){.041697561}}
\multiput(9.068,3.568)(-.033716495,.135242268){194}{\line(0,1){.135242268}}
\multiput(2.5,29.688)(.03365641,.050317949){195}{\line(0,1){.050317949}}
\multiput(15.977,31.008)(.12306098,.03332927){82}{\line(1,0){.12306098}}
\multiput(25.963,33.741)(.03343939,-.17040909){66}{\line(0,-1){.17040909}}
\multiput(28.17,22.494)(-.0344684685,-.0337117117){555}{\line(-1,0){.0344684685}}
\multiput(51.089,11.402)(-.069163142,.0336465257){331}{\line(-1,0){.069163142}}
\put(25.986,33.676){\line(1,0){11.049}}
\multiput(37.035,33.588)(.03338889,.1149){90}{\line(0,1){.1149}}
\put(55.066,25.014){\vector(1,0){.07}}\put(3.801,25.102){\line(1,0){51.265}}
\multiput(40.129,43.841)(.052612616,-.033672074){168}{\line(1,0){.052612616}}
\multiput(48.968,38.184)(.03367207,-.42510994){63}{\line(0,-1){.42510994}}
\multiput(37.035,33.676)(.033702355,-.0536270178){417}{\line(0,-1){.0536270178}}
\end{picture}

Fig 2. Kite unfolding
\

\

As we see from the figure above, any kite unfolding along the orbit consists of consecutive rotations of the kite along one of the two kite vertices, corresponding to the angles $\alpha$ and $\beta$ of the original triangle with angles correspondingly $ 2\alpha$ and $2\beta$.
\

We now assume that a kite is located in the standard Euclidean $xy$ coordinate plane and introduce several notations.
\

\

The \textsl{$\alpha$-vertex} and \textsl{$\beta$- vertex} are kite vertices corresponding to the angles $2\alpha$ and $2\beta$.
The two other vertices are called \textsl{side vertices}.
\

\

 The \textsl{kite diagonal} is a vector going from the $\alpha$-vertex to the $\beta$-vertex.
\

\

The \textsl{kite angle} is a \textsl{counterclockwise} angle between $x$-axis and the kite diagonal.
\

\

On Fig 3. $A$ and $B$ are $\alpha$ and $\beta$ vertices correspondingly, vector $\overrightarrow{AB}$ is a kite diagonal, $C$ and $D$ are side vertices.
Fig 4. shows a kite in standard position on the $xy$ plane.
\

\

 Note that any unfolding of $K$ is uniquely characterized by the sequence of angles $\pm 2\alpha$ or $\pm 2\beta$ depending on the kite vertex we rotate about and the direction of rotation. Such a sequence of angles is called the \textsl{combinatorics} of a kite unfolding.
\

\

\unitlength=1.00mm
\special{em:linewidth 0.4pt}
\linethickness{0.4pt}
\begin{picture}(112.33,67.34)
\put(2.00,19.00){\line(1,0){110.33}}
\put(9.34,4.67){\line(0,1){62.67}}
\put(62.34,29.34){\line(1,1){34.33}}
\put(96.67,63.67){\line(-1,0){18.33}}
\put(78.34,63.67){\line(-1,-2){17.67}}
\put(96.67,63.67){\line(-1,-1){36.00}}
\put(60.67,27.67){\line(1,1){36.00}}
\put(60.67,28.00){\line(5,2){36.00}}
\put(96.67,42.33){\line(0,1){21.33}}
\put(61.00,28.33){\circle*{1.33}}
\put(96.67,63.67){\circle*{2.11}}
\put(78.00,63.67){\circle*{2.00}}
\put(96.67,42.33){\circle*{2.00}}
\put(61.00,28.33){\circle*{2.00}}
\put(56.67,28.00){\makebox(0,0)[cc]{$A$}}
\put(100.33,63.67){\makebox(0,0)[cc]{$B$}}
\put(73.67,63.67){\makebox(0,0)[cc]{$C$}}
\put(100.33,42.33){\makebox(0,0)[cc]{$D$}}
\put(110.33,15.33){\makebox(0,0)[cc]{$x$}}
\put(6.00,66.00){\makebox(0,0)[cc]{$y$}}
\put(68.33,38.67){\makebox(0,0)[cc]{$\alpha$}}
\put(71.33,34.67){\makebox(0,0)[cc]{$\alpha$}}
\put(89.00,60.67){\makebox(0,0)[cc]{$\beta$}}
\put(94.00,56.67){\makebox(0,0)[cc]{$\beta$}}
\end{picture}
\

Fig 3. A kite on the $xy$ coordinate plane
\

\unitlength=0.5mm
\special{em:linewidth 0.4pt}
\linethickness{0.4pt}
\begin{picture}(129.33,114.67)
\put(9.00,52.00){\line(1,-1){29.67}}
\put(9.33,51.67){\line(1,1){29.33}}
\put(38.66,81.00){\line(1,-2){14.67}}
\put(53.33,51.67){\line(-1,-2){14.67}}
\put(9.33,52.00){\line(1,0){117.67}}
\put(9.33,12.67){\line(0,1){99.33}}
\put(15.00,54.00){\makebox(0,0)[cc]{$\alpha$}}
\put(15.00,49.67){\makebox(0,0)[cc]{$\alpha$}}
\put(49.00,54.33){\makebox(0,0)[cc]{$\beta$}}
\put(49.00,49.00){\makebox(0,0)[cc]{$\beta$}}
\put(129.33,47.33){\makebox(0,0)[cc]{$x$}}
\put(3.33,114.67){\makebox(0,0)[cc]{$y$}}
\end{picture}
\

Fig 4. A kite in the standard position on the $xy$ plane

\begin{Lemma} Assume a kite $K$ with a diagonal length 1 is in standard position and a kite $K'$ is obtained from $K$ by means of a particular combinatorics of length $n$. Let $x^{\alpha}_n$, $y^{\alpha}_n$ and $x^{\beta}_n$, $y^{\beta}_n$ be the coordinates of $\alpha$ and $\beta$  vertices of $K'$ and let $x_n$, $y_n$ be the coordinates of either of the two side vertices of $K'$. Then:
\

$1)$ $x^{\alpha}_n$, $y^{\alpha}_n$, $x^{\beta}_n$, $y^{\beta}_n$ are represented by trigonometric polynomials of angles $\alpha$, $\beta$ with integer coefficients, depending only on the combinatorics and of degree at most $2n-2$.
\

$2)$ $x_n=P_{2n}(\alpha, \beta)+ \frac{\sin (\beta)}{\sin(\alpha+\beta)}\cdot \cos(m\alpha+l\beta)$
\

$y_n=Q_{2n}(\alpha, \beta)+ \frac{\sin (\beta)}{\sin(\alpha+\beta)}\cdot \sin(m\alpha+l\beta)$,
\

where $P_{2n}(\alpha,\beta)$, $Q_{2n}(\alpha, \beta)$ are trigonometric polynomials with integer coefficients of degree at most $2n-2$ and $|m|+|l|\leq 2n-1$.
\end{Lemma}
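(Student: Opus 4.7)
The plan is to proceed by induction on $n$, tracking the four kite vertices simultaneously through the unfolding. In the base case the kite sits in standard position with the $\alpha$-vertex at the origin and the $\beta$-vertex at $(1,0)$, so (1) holds with constant polynomials; the upper side vertex then lies at distance $\sin\beta/\sin(\alpha+\beta)$ along the ray at angle $\alpha$, giving (2) with $P\equiv Q\equiv 0$ and $(m,l)=(1,0)$.

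For the inductive step, extend the combinatorics by one symbol, rotating the current kite about the $\alpha$-vertex by $\pm 2\alpha$ or about the $\beta$-vertex by $\pm 2\beta$. The standard planar rotation identity
\[
\binom{x'}{y'} = \binom{a}{b} + R_{\pm 2\gamma}\binom{x-a}{y-b}, \qquad \gamma\in\{\alpha,\beta\},
\]
with $(a,b)$ the (unmoved) hinge vertex, expresses the updated coordinates as $\mathbb{Z}$-linear combinations of the previous vertex coordinates and of $\cos(2\gamma)$, $\sin(2\gamma)$. Since the latter are trig polynomials with integer coefficients of degree $2$, and since the hinge vertex itself stays fixed, the updated non-hinge vertex acquires at most two additional degrees while integer coefficients are preserved. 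By induction this yields (1) at the next step.

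For (2), let $\theta_n$ denote the kite angle of $K^{(n)}$. Each rotation changes $\theta$ by exactly the signed rotation angle, so inductively $\theta_n = 2a\alpha + 2b\beta$ for integers $a,b$ whose sum $|a|+|b|$ equals the number of rotations performed. The upper side vertex is obtained from the $\alpha$-vertex by a rigid displacement of magnitude $\sin\beta/\sin(\alpha+\beta)$ in the direction making angle $\alpha$ with the diagonal $\overrightarrow{AB}$; this geometric relation is preserved at every step because each rotation is an isometry of the whole kite. Hence
\[
x_n = x^{\alpha}_n + \frac{\sin\beta}{\sin(\alpha+\beta)}\cos(\alpha+\theta_n), \qquad y_n = y^{\alpha}_n + \frac{\sin\beta}{\sin(\alpha+\beta)}\sin(\alpha+\theta_n),
\]
which is the announced form with $P_{2n}=x^{\alpha}_n$, $Q_{2n}=y^{\alpha}_n$, and $(m,l)=(2a+1,\,2b)$; the lower side vertex is handled identically with $\alpha$ replaced by $-\alpha$.

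The main difficulty is really bookkeeping rather than analysis. One has to fix the indexing convention so that the bounds $2n-2$ for the degree and $2n-1$ for $|m|+|l|$ come out sharply — the natural convention being that ``length $n$'' counts the kites in the unfolding chain, one more than the number of rotations performed — and then verify that each of the four possible extension moves preserves both the degree estimate of (1) and the rigid-offset description of the side vertex required for (2). Both verifications reduce to the rotation identity displayed above together with the fact that $R_{\pm 2\gamma}$ acts on the whole kite as an isometry; no analytic difficulty arises because no $\alpha$- or $\beta$-dependent denominators ever enter the update rules, so integer coefficients propagate automatically.
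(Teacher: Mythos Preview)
Your overall strategy---induction on the combinatorics length, tracking the kite angle $\theta_n$, and reading off the side vertex as a rigid offset of length $\sin\beta/\sin(\alpha+\beta)$ from the $\alpha$-vertex---is exactly the paper's, and your treatment of part~(2) matches the paper's argument essentially verbatim.

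There is, however, a slip in your justification of part~(1). The rotation identity you display does \emph{not} express the updated coordinates as $\mathbb{Z}$-linear combinations of the previous vertex coordinates together with $\cos(2\gamma)$ and $\sin(2\gamma)$: it produces \emph{products} such as $(x-a)\cos(2\gamma)$ and $(y-b)\sin(2\gamma)$. In general, multiplying an integer-coefficient trigonometric polynomial by $\cos(2\gamma)$ introduces half-integer coefficients via the product-to-sum formulas, so your ``integer coefficients propagate automatically'' step is not justified as written. What saves the argument---and what the paper writes down directly---is the observation you already exploit for part~(2): since the diagonal has length~$1$, one has $x^{\beta}_n-x^{\alpha}_n=\cos\phi_n$ and $y^{\beta}_n-y^{\alpha}_n=\sin\phi_n$, so rotating the $\beta$-vertex about the $\alpha$-vertex collapses via angle addition to
\[
x^{\beta}_{n+1}=x^{\alpha}_n+\cos\phi_{n+1},\qquad y^{\beta}_{n+1}=y^{\alpha}_n+\sin\phi_{n+1},
\]
and symmetrically when rotating about $\beta$. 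This adds a single monomial with coefficient $\pm 1$, so both the integer-coefficient claim and the degree bound $2n-2$ follow at once. Invoke the kite angle already in part~(1), not only in part~(2), and the argument is complete.
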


\begin{proof} The proof of the first statement goes by an easy induction on $n$. For n=1 the statement is trivial. If $\phi_n$ is the kite angle on the $n$-th step and on the $n+1$-th step we rotate, say, about $\alpha$-vertex, then $\phi_{n+1}=\phi_n\pm 2\alpha$ and $x^{\alpha}_{n+1}=x^{\alpha}_n$,  $y^{\alpha}_{n+1}=y^{\alpha}_n$,
$x^{\beta}_{n+1}=x^{\alpha}_n+\cos(\phi_{n+1})$,  $y^{\beta}_{n+1}=y^{\alpha}_{n}+\sin(\phi_{n+1})$.
The case when we rotate about $\beta$-vertex is entirely analogous. This completes the induction step.
\

\

The second statement easily follows from the first one by noticing that the length of the side, adjacent to the $\alpha$-vertex is $\frac{\sin (\beta)}{\sin(\alpha+\beta)}$ and so if $\phi_n$ is the kite angle of $K'$ then $x_n=x^{\alpha}_n+\frac{\sin (\beta)}{\sin(\alpha+\beta)}\cdot \cos(\phi_n\pm\alpha)$ and
 $y_n=y^{\alpha}_n+\frac{\sin (\beta)}{\sin(\alpha+\beta)}\cdot \sin(\phi_n\pm\alpha)$.
\end{proof}

\subsection{Local geometry near singular points}
In this section we study local dynamics of generalized diagonals near singular points. We consider a triangle with a side 1 and adjacent angles $\alpha$, $\beta$ in the standard position on the plane as shown on fig 5. and a generalized diagonal.
\

\

The \textsl{diagonal angle} is an oriented angle between $x$-axis and a diagonal. We want to emphasize that the generalized diagonal should not necessarily start from the `zero' vertex
of  a triangle in the standard position. On the figure below $\xi$ is a diagonal angle.
\

\unitlength .5mm 
\linethickness{0.4pt}
\ifx\plotpoint\undefined\newsavebox{\plotpoint}\fi 
\begin{picture}(236.75,93.339)(0,0)
\multiput(236,59)(.09375,-.0625){8}{\line(1,0){.09375}}
\put(142.836,76.721){\line(0,1){0}}
\put(85.206,41.012){\line(0,1){0}}
\thicklines
\put(28.285,21.213){\vector(1,0){125.159}}
\multiput(28.285,20.86)(.0673836634,.068259901){404}{\line(0,1){.068259901}}
\multiput(55.508,48.437)(.0962673267,-.0673861386){404}{\line(1,0){.0962673267}}
\multiput(163.343,92.985)(.117833,.059){6}{\line(1,0){.117833}}
\put(144.958,88.036){\line(0,-1){43.487}}
\multiput(144.958,44.548)(-.0673754717,.089390566){530}{\line(0,1){.089390566}}
\multiput(109.249,91.925)(.67375472,-.06671698){53}{\line(1,0){.67375472}}
\put(151.322,25.456){$x$}
\multiput(28.5,21)(.1165997994,.06745235707){997}{\line(1,0){.1165997994}}
\qbezier(50.75,33.75)(54.25,27.125)(53.75,21)
\put(56,27.75){$\xi$}
\end{picture}

Fig 5. The diagonal angle
\

\

Now let us look more carefully on the behaviour of billiard trajectories near the end of the generalized diagonal. On the fig 6. we see an unfolding corresponding to all trajectories
emanating from the vertex in the \textsl{clockwise} small neighbourhood of the generalized diagonal. Of course an analogous picture and all the arguments below hold for a counterclockwise neighbourhood.
\

\

The unfolding about the end vertex of the diagonal is called \textsl{ a rose}. It represents the unfolding of all billiard trajectories emanating from the vertex $A$ and clockwise close to the diagonal.
\

\

On the fig 6. the dashed line is a formal geometric continuation of the generalized diagonal. The only triangle $BDE$ in the rose which it intersects is called \textsl{the exit triangle}. Notice that the side $BE$ of the exit triangle may lie on the formal continuation of the diagonal. This possibility causes no difficulties in our arguments.
\

\

The relative position of the triangle $BDE$ on the $xy$-plane up to parallel translations is called \textsl{the exit position} of the exit triangle corresponding to the diagonal $AB$. Notice that many different diagonals emanating from $A$ may have exit triangles in the same exit position.
\

\

The oriented angle $\theta$ between $x$-axis and the side $BD$ of the exit triangle is called \textsl{the exit angle}.
\

\unitlength .5mm 
\linethickness{0.4pt}
\ifx\plotpoint\undefined\newsavebox{\plotpoint}\fi 
\begin{picture}(280.021,129.71)(0,0)
\multiput(236,59)(.09375,-.0625){8}{\line(1,0){.09375}}
\put(142.836,76.721){\line(0,1){0}}
\put(85.206,41.012){\line(0,1){0}}
\thicklines
\put(28.285,21.213){\vector(1,0){125.159}}
\multiput(28.285,20.86)(.0673836634,.068259901){404}{\line(0,1){.068259901}}
\multiput(55.508,48.437)(.0962673267,-.0673861386){404}{\line(1,0){.0962673267}}
\multiput(163.343,92.985)(.117833,.059){6}{\line(1,0){.117833}}
\put(151.322,25.456){$x$}
\multiput(28.638,21.213)(.15817895772,.06744346116){1017}{\line(1,0){.15817895772}}
\multiput(189.506,89.803)(-.309361842,-.067453947){152}{\line(-1,0){.309361842}}
\multiput(142.483,79.55)(-.06733333,-.46579365){63}{\line(0,-1){.46579365}}
\put(138.241,50.205){\line(0,1){0}}
\put(138.241,50.205){\line(0,1){0}}
\multiput(138.241,50.205)(.0867325383,.0674582624){587}{\line(1,0){.0867325383}}
\multiput(189.153,89.803)(-.0674064748,-.1475251799){278}{\line(0,-1){.1475251799}}
\multiput(170.414,48.791)(-1.5320476,.0673333){21}{\line(-1,0){1.5320476}}
\multiput(170.493,48.772)(.084474886,-.067191781){219}{\line(1,0){.084474886}}
\put(188.993,34.057){\line(0,1){55.5}}
\multiput(188.993,89.556)(.0673345196,-.145886121){281}{\line(0,-1){.145886121}}
\multiput(207.914,48.562)(-.087023256,-.067465116){215}{\line(-1,0){.087023256}}
\multiput(207.914,48.562)(.30764634,.06665854){82}{\line(1,0){.30764634}}
\multiput(233.141,54.028)(-.0832981132,.0674320755){530}{\line(-1,0){.0832981132}}
\multiput(188.993,89.767)(.311617021,-.067099291){141}{\line(1,0){.311617021}}
\put(232.931,80.306){\line(0,-1){25.858}}
\multiput(232.931,80.096)(.067407609,.116538043){184}{\line(0,1){.116538043}}
\multiput(245.334,101.539)(-.315337079,-.067320225){178}{\line(-1,0){.315337079}}
\multiput(189.204,89.556)(.0890878553,.0673591731){387}{\line(1,0){.0890878553}}
\put(223.681,115.624){\line(3,-2){21.443}}
\multiput(188.852,89.205)(.150211,.066607){6}{\line(1,0){.150211}}
\multiput(190.655,90.005)(.150211,.066607){6}{\line(1,0){.150211}}
\multiput(192.457,90.804)(.150211,.066607){6}{\line(1,0){.150211}}
\multiput(194.26,91.603)(.150211,.066607){6}{\line(1,0){.150211}}
\multiput(196.063,92.403)(.150211,.066607){6}{\line(1,0){.150211}}
\multiput(197.865,93.202)(.150211,.066607){6}{\line(1,0){.150211}}
\multiput(199.668,94.001)(.150211,.066607){6}{\line(1,0){.150211}}
\multiput(201.47,94.8)(.150211,.066607){6}{\line(1,0){.150211}}
\multiput(203.273,95.6)(.150211,.066607){6}{\line(1,0){.150211}}
\multiput(205.075,96.399)(.150211,.066607){6}{\line(1,0){.150211}}
\multiput(206.878,97.198)(.150211,.066607){6}{\line(1,0){.150211}}
\multiput(208.68,97.998)(.150211,.066607){6}{\line(1,0){.150211}}
\multiput(210.483,98.797)(.150211,.066607){6}{\line(1,0){.150211}}
\multiput(212.285,99.596)(.150211,.066607){6}{\line(1,0){.150211}}
\multiput(214.088,100.395)(.150211,.066607){6}{\line(1,0){.150211}}
\multiput(215.89,101.195)(.150211,.066607){6}{\line(1,0){.150211}}
\multiput(217.693,101.994)(.150211,.066607){6}{\line(1,0){.150211}}
\multiput(219.496,102.793)(.150211,.066607){6}{\line(1,0){.150211}}
\multiput(221.298,103.593)(.150211,.066607){6}{\line(1,0){.150211}}
\multiput(223.101,104.392)(.150211,.066607){6}{\line(1,0){.150211}}
\multiput(224.903,105.191)(.150211,.066607){6}{\line(1,0){.150211}}
\multiput(226.706,105.99)(.150211,.066607){6}{\line(1,0){.150211}}
\multiput(228.508,106.79)(.150211,.066607){6}{\line(1,0){.150211}}
\multiput(230.311,107.589)(.150211,.066607){6}{\line(1,0){.150211}}
\multiput(232.113,108.388)(.150211,.066607){6}{\line(1,0){.150211}}
\multiput(233.916,109.188)(.150211,.066607){6}{\line(1,0){.150211}}
\multiput(235.718,109.987)(.150211,.066607){6}{\line(1,0){.150211}}
\multiput(237.521,110.786)(.150211,.066607){6}{\line(1,0){.150211}}
\multiput(239.323,111.585)(.150211,.066607){6}{\line(1,0){.150211}}
\multiput(241.126,112.385)(.150211,.066607){6}{\line(1,0){.150211}}
\multiput(242.928,113.184)(.150211,.066607){6}{\line(1,0){.150211}}
\multiput(244.731,113.983)(.150211,.066607){6}{\line(1,0){.150211}}
\multiput(246.534,114.783)(.150211,.066607){6}{\line(1,0){.150211}}
\multiput(248.336,115.582)(.150211,.066607){6}{\line(1,0){.150211}}
\multiput(250.139,116.381)(.150211,.066607){6}{\line(1,0){.150211}}
\multiput(251.941,117.18)(.150211,.066607){6}{\line(1,0){.150211}}
\multiput(253.744,117.98)(.150211,.066607){6}{\line(1,0){.150211}}
\multiput(255.546,118.779)(.150211,.066607){6}{\line(1,0){.150211}}
\multiput(257.349,119.578)(.150211,.066607){6}{\line(1,0){.150211}}
\multiput(259.151,120.378)(.150211,.066607){6}{\line(1,0){.150211}}
\multiput(260.954,121.177)(.150211,.066607){6}{\line(1,0){.150211}}
\multiput(262.756,121.976)(.150211,.066607){6}{\line(1,0){.150211}}
\multiput(264.559,122.775)(.150211,.066607){6}{\line(1,0){.150211}}
\multiput(266.361,123.575)(.150211,.066607){6}{\line(1,0){.150211}}
\multiput(268.164,124.374)(.150211,.066607){6}{\line(1,0){.150211}}
\multiput(269.966,125.173)(.150211,.066607){6}{\line(1,0){.150211}}
\multiput(271.769,125.973)(.150211,.066607){6}{\line(1,0){.150211}}
\multiput(273.572,126.772)(.150211,.066607){6}{\line(1,0){.150211}}
\multiput(275.374,127.571)(.150211,.066607){6}{\line(1,0){.150211}}
\multiput(277.177,128.37)(.150211,.066607){6}{\line(1,0){.150211}}
\multiput(278.979,129.17)(.150211,.066607){6}{\line(1,0){.150211}}
\put(186.26,91.869){$B$}
\put(248.067,100.909){$D$}
\put(21.653,21.443){$A$}
\put(131.391,49.613){$C$}
\put(188.993,89.346){\vector(1,0){84.721}}
\put(223.89,118.357){$E$}
\put(52.767,31.324){\vector(-2,1){.141}}\qbezier(55.499,21.023)(60.65,28.065)(52.767,31.324)
\put(219.896,95.863){\vector(-1,1){.141}}\qbezier(220.316,89.136)(223.47,93.761)(219.896,95.863)
\put(60.25,25){$\xi$}
\put(225,91.5){$\theta$}
\end{picture}

Fig 6. The rose
\

\

Now we are going to use just defined concept of a rose to analyze the local behaviour of the diagonals. For any $\delta>0$ small enough let $\Delta_{\delta}$ be the set of triangles with a fixed side of length 1 such that all angles are greater then $\delta$. From now and further we assume that we consider triangles from the set $\Delta_{\delta}$ for some $\delta>0$. As $\delta$ can be chosen arbitrarily small our arguments would ultimately work for the whole set of triangles. Let us now prove a technical statement that for two close enough diagonals the greater algebraic length implies the strictly greater geometric length.

\begin{Lemma} Let $\xi_n$ be an indexed partition and $I\in\xi_n$ be an interval with endpoints $x$, $y$ with indices $p<q$. There are universal constants $b$, $r>0$ such that if $L_p$ and $L_q$ are geometric lengths of the diagonals corresponding to points $x$, $y$ and the length $|I|< b/p$ then $L_q>L_p+r$.
\end{Lemma}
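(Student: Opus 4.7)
The plan is to unfold both diagonals from the common vertex $A$ into straight rays in the plane, use $|I|<b/p$ to force the two unfoldings to share the same sequence of triangles up to the endpoint $B$ of the $p$-diagonal, and then extract a universal length gap $L_q-L_p\ge r$ from the fact that the endpoint $V$ of the $q$-diagonal is separated from $B$ by a universal Euclidean distance.

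First I would record the soft ingredients. All unfolded triangles lie in $\Delta_\delta$, so they have diameter at most some $D_\delta$, minimum side at least some $c_\delta>0$, and minimum altitude from any vertex at least some $h_\delta>0$. In particular, summing bounded chord lengths gives $L_p\le C_\delta p$, and the perpendicular displacement of the $q$-ray from the $p$-ray at arc length $L_p$ is at most $L_p|I|\le C_\delta b$, arbitrarily small in $b$. Next, since the open cone $(x,y)\subset I$ contains no cutting points of $\xi_n$, every interior ray passes through a common sequence of at least $n+1$ unfolded triangles without hitting a vertex, and by continuity this sequence agrees with the unfoldings of both boundary rays on their overlap. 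Hence the first $p+1$ unfolded triangles of the $q$-ray coincide with those of the $p$-ray, and the $q$-ray exits the last common triangle $T_p$ within $C_\delta b$ of $B$ before entering the rose about $B$.

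The crux is the universal separation $|BV|\ge c'_\delta:=\min(c_\delta,h_\delta)>0$. I would split on where $V$ sits in the $q$-unfolding. If $V$ is a vertex of a triangle containing $B$, then $V\ne B$ (forced by $q>p$ together with the observation that the $q$-ray cannot terminate at any vertex inside the first $p+1$ shared triangles) gives $|BV|\ge c_\delta$ by the minimum-side bound. Otherwise $V$ lies in a triangle not containing $B$, which the $q$-ray can reach only after crossing some edge opposite $B$ in a rose triangle; any such edge lies at distance at least $h_\delta$ from $B$, and by unimodality of the distance from $B$ along a straight line, every point of the ray past this crossing, including $V$, stays at distance at least $h_\delta$ from $B$. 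Either case yields $|BV|\ge c'_\delta$.

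To finish, I would place $A$ at the origin with the $q$-ray along the positive $x$-axis, so $V=(L_q,0)$ and $B=(L_p\cos|I|,L_p\sin|I|)$, giving
\[
(L_q-L_p\cos|I|)^2+L_p^2\sin^2|I|=|BV|^2\ge (c'_\delta)^2.
\]
Since $L_p\sin|I|\le C_\delta b$, shrinking $b$ makes the second term at most $(c'_\delta)^2/2$, so $(L_q-L_p\cos|I|)^2\ge (c'_\delta)^2/2$. The sign is forced because the $q$-ray must first traverse the shared initial $p+1$ triangles before reaching $V$, which gives $L_q\ge L_p\cos|I|-O(C_\delta b)$ and rules out the negative branch; the positive branch then yields $L_q\ge L_p\cos|I|+c'_\delta/\sqrt 2\ge L_p+r$ with $r:=c'_\delta/2$ once $b$ is small enough. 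The main obstacle is the separation step, which leans essentially on the $\delta$-bound on angles (to supply $c_\delta$ and $h_\delta$) and on unimodality of distance from $B$ along a straight ray; the linear bound on $L_p$, the shared initial triangle sequence, and the closing coordinate computation are all routine once the separation is secured.
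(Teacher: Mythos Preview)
Your proposal is correct and follows essentially the same strategy as the paper: unfold both diagonals from the common vertex, use $|I|<b/p$ and the linear bound $L_p\le D_\delta p$ to force the two unfoldings to coincide up to the $p$-endpoint, then extract a universal Euclidean separation between the two endpoints from the minimum-side bound in $\Delta_\delta$ and convert it into $L_q>L_p+r$.

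The packaging differs only cosmetically. The paper argues via the circular arc $\phi$ at radius $L_p+r$: since $|\phi|+r<R_\delta$ (the minimum side length), the entire arc lies inside the ball $B(P,R_\delta)$, while every rose vertex other than $P$ lies outside that ball, so the $q$-endpoint cannot fall in the sector bounded by $\phi$. You instead do a two-case separation argument (rose vertex versus beyond the exit edge, invoking the altitude bound $h_\delta$ and unimodality of distance along a line) and then a coordinate computation with the law of cosines. Your version is slightly more careful in explicitly covering the possibility that $V$ lies beyond the rose, which the paper handles only implicitly; conversely the paper's $\phi$-arc formulation makes the final step a one-liner, avoiding the sign discussion you need. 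Both arguments operate at the same level of rigor and rely on exactly the same $\Delta_\delta$ constants.
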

\begin{proof}
It is well known that there is a constant $D_{\delta}$ such that for any triangle from $\Delta_{\delta}$ and any diagonal of algebraic length $p$ its geometric length is
 less than $D_{\delta}p$. On fig 7. $AP$ is a diagonal corresponding to $x$ and $\phi$ is a circular segment of the angular beam, corresponding to $I$ on the distance $L_p+r$ from $A$ where $r$ is a small constant to be chosen later.
 \

 \

  The point $B$ lies outside the $\phi$-segment because $p<q$ and so all the trajectories from the $\phi$-segment intersect $BC$ as by definition of the partition $\xi_n$ they all have the same unfolding combinatorics of length at least $q$. The side lengths of all triangles from $\Delta_{\delta}$ are bounded from below by some constant $R_{\delta}$.
  \

  \

  Assuming that $|I|<b/p$ we have $|\phi|$=$|I|(L_p+r)\leq\frac{b}{p}(D_{\delta}p+r)=bD_{\delta}+br/p$ and it is now clear that there are small enough constants $b$, $r>0$ depending only on $\delta$ such that $|\phi|+r<R_{\delta}$. It implies that the points of the rose can not lie inside the $\phi$- segment because on the one hand they are separated from the $\phi$-segment by the point $B$ and on the other hand their distances to $P$ are bounded from below by $R_{\delta}$.
 \

 \unitlength .5mm 
\linethickness{0.4pt}
\ifx\plotpoint\undefined\newsavebox{\plotpoint}\fi 
\begin{picture}(280.021,129.71)(0,0)
\multiput(236,59)(.09375,-.0625){8}{\line(1,0){.09375}}
\put(142.836,76.721){\line(0,1){0}}
\put(85.206,41.012){\line(0,1){0}}
\thicklines
\put(28.285,21.213){\vector(1,0){125.159}}
\multiput(28.285,20.86)(.0673836634,.068259901){404}{\line(0,1){.068259901}}
\multiput(55.508,48.437)(.0962673267,-.0673861386){404}{\line(1,0){.0962673267}}
\multiput(163.343,92.985)(.117833,.059){6}{\line(1,0){.117833}}
\put(151.322,25.456){$x$}
\multiput(28.638,21.213)(.15817895772,.06744346116){1017}{\line(1,0){.15817895772}}
\multiput(189.506,89.803)(-.309361842,-.067453947){152}{\line(-1,0){.309361842}}
\multiput(142.483,79.55)(-.06733333,-.46579365){63}{\line(0,-1){.46579365}}
\put(138.241,50.205){\line(0,1){0}}
\put(138.241,50.205){\line(0,1){0}}
\multiput(138.241,50.205)(.0867325383,.0674582624){587}{\line(1,0){.0867325383}}
\multiput(189.153,89.803)(-.0674064748,-.1475251799){278}{\line(0,-1){.1475251799}}
\multiput(170.414,48.791)(-1.5320476,.0673333){21}{\line(-1,0){1.5320476}}
\multiput(170.493,48.772)(.084474886,-.067191781){219}{\line(1,0){.084474886}}
\put(188.993,34.057){\line(0,1){55.5}}
\multiput(188.993,89.556)(.0673345196,-.145886121){281}{\line(0,-1){.145886121}}
\multiput(207.914,48.562)(-.087023256,-.067465116){215}{\line(-1,0){.087023256}}
\multiput(207.914,48.562)(.30764634,.06665854){82}{\line(1,0){.30764634}}
\multiput(233.141,54.028)(-.0832981132,.0674320755){530}{\line(-1,0){.0832981132}}
\multiput(188.993,89.767)(.311617021,-.067099291){141}{\line(1,0){.311617021}}
\put(232.931,80.306){\line(0,-1){25.858}}
\multiput(232.931,80.096)(.067407609,.116538043){184}{\line(0,1){.116538043}}
\multiput(245.334,101.539)(-.315337079,-.067320225){178}{\line(-1,0){.315337079}}
\multiput(189.204,89.556)(.0890878553,.0673591731){387}{\line(1,0){.0890878553}}
\put(223.681,115.624){\line(3,-2){21.443}}
\multiput(188.852,89.205)(.150211,.066607){6}{\line(1,0){.150211}}
\multiput(190.655,90.005)(.150211,.066607){6}{\line(1,0){.150211}}
\multiput(192.457,90.804)(.150211,.066607){6}{\line(1,0){.150211}}
\multiput(194.26,91.603)(.150211,.066607){6}{\line(1,0){.150211}}
\multiput(196.063,92.403)(.150211,.066607){6}{\line(1,0){.150211}}
\multiput(197.865,93.202)(.150211,.066607){6}{\line(1,0){.150211}}
\multiput(199.668,94.001)(.150211,.066607){6}{\line(1,0){.150211}}
\multiput(201.47,94.8)(.150211,.066607){6}{\line(1,0){.150211}}
\multiput(203.273,95.6)(.150211,.066607){6}{\line(1,0){.150211}}
\multiput(205.075,96.399)(.150211,.066607){6}{\line(1,0){.150211}}
\multiput(206.878,97.198)(.150211,.066607){6}{\line(1,0){.150211}}
\multiput(208.68,97.998)(.150211,.066607){6}{\line(1,0){.150211}}
\multiput(210.483,98.797)(.150211,.066607){6}{\line(1,0){.150211}}
\multiput(212.285,99.596)(.150211,.066607){6}{\line(1,0){.150211}}
\multiput(214.088,100.395)(.150211,.066607){6}{\line(1,0){.150211}}
\multiput(215.89,101.195)(.150211,.066607){6}{\line(1,0){.150211}}
\multiput(217.693,101.994)(.150211,.066607){6}{\line(1,0){.150211}}
\multiput(219.496,102.793)(.150211,.066607){6}{\line(1,0){.150211}}
\multiput(221.298,103.593)(.150211,.066607){6}{\line(1,0){.150211}}
\multiput(223.101,104.392)(.150211,.066607){6}{\line(1,0){.150211}}
\multiput(224.903,105.191)(.150211,.066607){6}{\line(1,0){.150211}}
\multiput(226.706,105.99)(.150211,.066607){6}{\line(1,0){.150211}}
\multiput(228.508,106.79)(.150211,.066607){6}{\line(1,0){.150211}}
\multiput(230.311,107.589)(.150211,.066607){6}{\line(1,0){.150211}}
\multiput(232.113,108.388)(.150211,.066607){6}{\line(1,0){.150211}}
\multiput(233.916,109.188)(.150211,.066607){6}{\line(1,0){.150211}}
\multiput(235.718,109.987)(.150211,.066607){6}{\line(1,0){.150211}}
\multiput(237.521,110.786)(.150211,.066607){6}{\line(1,0){.150211}}
\multiput(239.323,111.585)(.150211,.066607){6}{\line(1,0){.150211}}
\multiput(241.126,112.385)(.150211,.066607){6}{\line(1,0){.150211}}
\multiput(242.928,113.184)(.150211,.066607){6}{\line(1,0){.150211}}
\multiput(244.731,113.983)(.150211,.066607){6}{\line(1,0){.150211}}
\multiput(246.534,114.783)(.150211,.066607){6}{\line(1,0){.150211}}
\multiput(248.336,115.582)(.150211,.066607){6}{\line(1,0){.150211}}
\multiput(250.139,116.381)(.150211,.066607){6}{\line(1,0){.150211}}
\multiput(251.941,117.18)(.150211,.066607){6}{\line(1,0){.150211}}
\multiput(253.744,117.98)(.150211,.066607){6}{\line(1,0){.150211}}
\multiput(255.546,118.779)(.150211,.066607){6}{\line(1,0){.150211}}
\multiput(257.349,119.578)(.150211,.066607){6}{\line(1,0){.150211}}
\multiput(259.151,120.378)(.150211,.066607){6}{\line(1,0){.150211}}
\multiput(260.954,121.177)(.150211,.066607){6}{\line(1,0){.150211}}
\multiput(262.756,121.976)(.150211,.066607){6}{\line(1,0){.150211}}
\multiput(264.559,122.775)(.150211,.066607){6}{\line(1,0){.150211}}
\multiput(266.361,123.575)(.150211,.066607){6}{\line(1,0){.150211}}
\multiput(268.164,124.374)(.150211,.066607){6}{\line(1,0){.150211}}
\multiput(269.966,125.173)(.150211,.066607){6}{\line(1,0){.150211}}
\multiput(271.769,125.973)(.150211,.066607){6}{\line(1,0){.150211}}
\multiput(273.572,126.772)(.150211,.066607){6}{\line(1,0){.150211}}
\multiput(275.374,127.571)(.150211,.066607){6}{\line(1,0){.150211}}
\multiput(277.177,128.37)(.150211,.066607){6}{\line(1,0){.150211}}
\multiput(278.979,129.17)(.150211,.066607){6}{\line(1,0){.150211}}
\put(21.653,21.443){$A$}
\put(133.073,42.255){$B$}
\multiput(28.801,21.233)(.18439360639,.06741458541){1001}{\line(1,0){.18439360639}}
\qbezier(213.379,88.925)(211.382,96.914)(208.964,98.175)
\qbezier(177.01,62.647)(211.802,48.877)(221.788,82.619)
\qbezier(220.106,103.01)(224.31,92.184)(221.788,82.619)
\put(214.009,91.448){$\phi$}
\qbezier(177.01,62.647)(165.763,67.693)(163.345,78.624)
\put(137.908,81.778){$C$}
\put(186.049,92.92){$P$}
\end{picture}
\

Fig 7. The $\phi$ - segment
\end{proof}
 Our next lemma proves a simple but important observation that for two close enough diagonals the segment connecting their endpoints is also a diagonal.
\begin{Lemma} Let $\xi_n$ be an indexed partition and $I\in\xi_n$ be an interval with endpoints with indices $p<q$. There is a universal constant $b$ such that if $|I|< b/q$ then the segment connecting the endpoints of the diagonals on fig 8. is either a side of the triangle or an unfolding of some generalized diagonal of algebraic length bounded by $q-p$.
\end{Lemma}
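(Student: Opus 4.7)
The plan is to unfold both diagonals together using their shared combinatorics. Since $I\in\xi_n$ for some $n\ge q$, the two rays corresponding to the endpoints of $I$ have identical unfolding combinatorics for at least the first $p$ reflections. Let $T_1,\ldots,T_q$ be the sequence of unfolded triangles along the $q$-ray, with common source vertex $S$, and let $V_p,V_q$ denote the terminal vertices of the unfolded $p$-ray and $q$-ray. Then $V_p$ is a vertex of $T_p$ and $V_q$ is a vertex of $T_q$; the aim is to identify the straight segment $\overline{V_p V_q}$ with either a triangle side or a billiard orbit of algebraic length at most $q-p$.

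First I would check that $V_p$ in fact lies on the edge $E=T_p\cap T_{p+1}$ through which the $q$-ray exits $T_p$, so that $V_p$ is a vertex of $T_{p+1}$ as well. Since $|I|<b/q\le b/p$, the endpoint of the $p$-ray sits at transverse distance $O(|I|\cdot L_p)=O(bD_\delta)$ from the $q$-ray's crossing point of $E$, and, using the same kind of estimate underlying Lemma 3, for $b$ small this is too small to let the $p$-ray terminate at the far vertex of $T_p$. Hence $V_p$ is one of the two endpoints of $E$.

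The main geometric step is to show that $\overline{V_p V_q}$ lies entirely inside the strip $T_{p+1}\cup\cdots\cup T_q$. The idea is that this segment is a small transverse perturbation of the portion of the $q$-ray between its crossing of $E$ and $V_q$: parametrizing $X(t)=(1-t)V_p+tV_q$ and comparing with the $q$-ray point $S+((1-t)L_p+tL_q)\hat u$, the displacement is of order $|I|\cdot L_p=O(bD_\delta)$ uniformly in $t$. The uniform lower bound $R_\delta$ on side lengths in $\Delta_\delta$, combined with choosing $b$ small enough in terms of $\delta$, then forces $\overline{V_p V_q}$ to cross only interior edges of the strip rather than escape through a boundary edge.

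Once the containment is established, folding back is immediate: the straight segment $\overline{V_p V_q}$ traverses the $q-p$ triangles $T_{p+1},\ldots,T_q$ and hence crosses at most $q-p-1$ interior edges, each corresponding to one reflection. If no such edge is crossed, $\overline{V_p V_q}$ is a side of some $T_i$ and folds back to a side of the original triangle; otherwise the fold is a generalized diagonal from vertex $V_p$ to vertex $V_q$ of algebraic length at most $q-p-1\le q-p$. The delicate part of the plan is precisely the strip-containment estimate, because the $q$-ray can approach non-shared vertices of the $T_i$ arbitrarily closely; it is the uniform lower bound $R_\delta$ that allows one to fix $b$ universally and rule out an exit through a boundary edge.
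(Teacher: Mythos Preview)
Your overall plan---identify $V_p$ as a vertex of $T_{p+1}$, show $\overline{V_pV_q}$ stays in the strip $T_{p+1}\cup\cdots\cup T_q$, then fold back---is reasonable, and the folding step is fine. But the strip-containment argument has a genuine gap. You bound the transverse distance from $\overline{V_pV_q}$ to the $q$-ray by $O(bD_\delta)$ and then invoke the side-length lower bound $R_\delta$ to conclude that the segment cannot exit through a boundary edge. This does not work: the width of the strip on a given side of the $q$-ray is \emph{not} controlled by $R_\delta$. If the $q$-ray crosses an interior edge $E_i$ at distance $\epsilon$ from one of its endpoints, the adjacent boundary edge $e_i$ comes within roughly $\epsilon$ of the $q$-ray, and $\epsilon$ can be arbitrarily small regardless of $R_\delta$. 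So the bare perturbation estimate cannot rule out an exit.

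The paper's argument avoids this metric comparison entirely and is much simpler. It observes that the open angular sector (the ``$\phi$-segment'') bounded by the $p$-ray and the $q$-ray contains \emph{no vertex} of any $T_i$ with $i\le q$: any such vertex would lie on an interior ray of $I$ and thus give a generalized diagonal of length $\le q\le n$ strictly inside $I$, contradicting $I\in\xi_n$. (Lemma~3.2 is used to ensure that vertices appearing at algebraic times $>q$ lie geometrically beyond radius $L_q+r$, so the sector up to that radius is vertex-free.) Since $P=V_p$ and $Q=V_q$ lie on the two bounding rays of this convex sector, the interior of $\overline{PQ}$ lies in the open sector and therefore meets no vertex. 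That immediately forces $\overline{PQ}$ to be either a triangle side or the unfolding of a billiard orbit, with algebraic length at most $q-p$. Note that this convexity/no-vertex observation is precisely what closes the gap in your argument: the dangerous ``narrow spots'' of the strip always lie \emph{outside} the open sector, on whichever side of the $q$-ray is away from the $p$-ray, so $\overline{PQ}$ never reaches them.
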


\begin{proof} We take $b$ from lemma 3.2. Then by lemma 3.2 the whole open angular $\phi$ - segment on fig 8. consists of orbits with the same unfolding of algebraic length at least $q$. This happens because any combinatorics change inside the $\phi$-segment must happen at the algebraic time greater than $q$, which by lemma 3.2 geometrically happens outside the $\phi$-segment of radius $L_q+r$.
As the combinatorics does not change then any straight line segment located strictly inside the angular $\phi$-segment does not hit any vertex and so either represents the unfolding of some billiard trajectory or belongs to the side of one of the unfolding triangles. In particular the segment $PQ$ represents the unfolding of some generalized diagonal or coincides with a side of the triangle. In the first case its algebraic length is obviously bounded above by $q-p$.
\

\unitlength .5mm 
\linethickness{0.4pt}
\ifx\plotpoint\undefined\newsavebox{\plotpoint}\fi 
\begin{picture}(243.231,109.107)(0,0)
\multiput(168.648,42.183)(.0875,-.058333){6}{\line(1,0){.0875}}
\put(103.433,54.588){\line(0,1){0}}
\put(63.092,29.591){\line(0,1){0}}
\thicklines
\put(23.247,15.732){\vector(1,0){87.611}}
\multiput(23.247,15.485)(.0673360424,.0682116608){283}{\line(0,1){.0682116608}}
\multiput(42.303,34.789)(.0961992933,-.0673385159){283}{\line(1,0){.0961992933}}
\multiput(117.788,65.972)(.12373,.06195){4}{\line(1,0){.12373}}
\put(109.373,18.702){$x$}
\multiput(23.494,15.732)(.15815674157,.06743398876){712}{\line(1,0){.15815674157}}
\put(100.216,36.026){\line(0,1){0}}
\put(100.216,36.026){\line(0,1){0}}
\put(18.605,15.893){$A$}
\multiput(135.875,63.768)(.128471,.059786){7}{\line(1,0){.128471}}
\multiput(137.674,64.605)(.128471,.059786){7}{\line(1,0){.128471}}
\multiput(139.472,65.442)(.128471,.059786){7}{\line(1,0){.128471}}
\multiput(141.271,66.279)(.128471,.059786){7}{\line(1,0){.128471}}
\multiput(143.07,67.116)(.128471,.059786){7}{\line(1,0){.128471}}
\multiput(144.868,67.953)(.128471,.059786){7}{\line(1,0){.128471}}
\multiput(146.667,68.79)(.128471,.059786){7}{\line(1,0){.128471}}
\multiput(148.465,69.627)(.128471,.059786){7}{\line(1,0){.128471}}
\multiput(150.264,70.464)(.128471,.059786){7}{\line(1,0){.128471}}
\multiput(152.063,71.301)(.128471,.059786){7}{\line(1,0){.128471}}
\multiput(153.861,72.138)(.128471,.059786){7}{\line(1,0){.128471}}
\multiput(155.66,72.975)(.128471,.059786){7}{\line(1,0){.128471}}
\multiput(157.458,73.812)(.128471,.059786){7}{\line(1,0){.128471}}
\multiput(159.257,74.649)(.128471,.059786){7}{\line(1,0){.128471}}
\multiput(161.056,75.486)(.128471,.059786){7}{\line(1,0){.128471}}
\multiput(162.854,76.323)(.128471,.059786){7}{\line(1,0){.128471}}
\multiput(164.653,77.16)(.128471,.059786){7}{\line(1,0){.128471}}
\multiput(166.451,77.997)(.128471,.059786){7}{\line(1,0){.128471}}
\multiput(168.25,78.834)(.128471,.059786){7}{\line(1,0){.128471}}
\multiput(170.049,79.671)(.128471,.059786){7}{\line(1,0){.128471}}
\multiput(171.847,80.508)(.128471,.059786){7}{\line(1,0){.128471}}
\multiput(173.646,81.345)(.128471,.059786){7}{\line(1,0){.128471}}
\multiput(175.444,82.182)(.128471,.059786){7}{\line(1,0){.128471}}
\multiput(177.243,83.019)(.128471,.059786){7}{\line(1,0){.128471}}
\multiput(179.041,83.856)(.128471,.059786){7}{\line(1,0){.128471}}
\multiput(180.84,84.693)(.128471,.059786){7}{\line(1,0){.128471}}
\multiput(182.639,85.53)(.128471,.059786){7}{\line(1,0){.128471}}
\multiput(184.437,86.367)(.128471,.059786){7}{\line(1,0){.128471}}
\multiput(186.236,87.204)(.128471,.059786){7}{\line(1,0){.128471}}
\multiput(188.034,88.041)(.128471,.059786){7}{\line(1,0){.128471}}
\multiput(189.833,88.878)(.128471,.059786){7}{\line(1,0){.128471}}
\multiput(191.632,89.715)(.128471,.059786){7}{\line(1,0){.128471}}
\multiput(193.43,90.552)(.128471,.059786){7}{\line(1,0){.128471}}
\multiput(195.229,91.389)(.128471,.059786){7}{\line(1,0){.128471}}
\multiput(197.027,92.226)(.128471,.059786){7}{\line(1,0){.128471}}
\multiput(198.826,93.063)(.128471,.059786){7}{\line(1,0){.128471}}
\multiput(200.625,93.9)(.128471,.059786){7}{\line(1,0){.128471}}
\multiput(202.423,94.737)(.128471,.059786){7}{\line(1,0){.128471}}
\multiput(204.222,95.574)(.128471,.059786){7}{\line(1,0){.128471}}
\multiput(206.02,96.411)(.128471,.059786){7}{\line(1,0){.128471}}
\multiput(207.819,97.248)(.128471,.059786){7}{\line(1,0){.128471}}
\multiput(209.618,98.085)(.128471,.059786){7}{\line(1,0){.128471}}
\multiput(211.416,98.922)(.128471,.059786){7}{\line(1,0){.128471}}
\multiput(213.215,99.759)(.128471,.059786){7}{\line(1,0){.128471}}
\multiput(215.013,100.596)(.128471,.059786){7}{\line(1,0){.128471}}
\multiput(216.812,101.433)(.128471,.059786){7}{\line(1,0){.128471}}
\multiput(218.611,102.27)(.128471,.059786){7}{\line(1,0){.128471}}
\multiput(220.409,103.107)(.128471,.059786){7}{\line(1,0){.128471}}
\multiput(222.208,103.944)(.128471,.059786){7}{\line(1,0){.128471}}
\multiput(224.006,104.781)(.128471,.059786){7}{\line(1,0){.128471}}
\multiput(225.805,105.618)(.128471,.059786){7}{\line(1,0){.128471}}
\multiput(227.604,106.455)(.128471,.059786){7}{\line(1,0){.128471}}
\multiput(229.402,107.292)(.128471,.059786){7}{\line(1,0){.128471}}
\multiput(231.201,108.129)(.128471,.059786){7}{\line(1,0){.128471}}
\multiput(23.545,15.557)(.20815621433,.06745420067){1016}{\line(1,0){.20815621433}}
\multiput(136.646,64.119)(.3350690917,.067444314){293}{\line(1,0){.3350690917}}
\multiput(234.612,83.88)(-.06722943,-.105646248){197}{\line(0,-1){.105646248}}
\multiput(221.367,63.068)(-.06688989,.51361878){88}{\line(0,1){.51361878}}
\multiput(215.481,108.266)(.0673319663,-.0860352903){281}{\line(0,-1){.0860352903}}
\multiput(234.471,83.739)(.172385,.054659){5}{\line(1,0){.172385}}
\multiput(236.195,84.286)(.172385,.054659){5}{\line(1,0){.172385}}
\multiput(237.919,84.833)(.172385,.054659){5}{\line(1,0){.172385}}
\multiput(239.642,85.379)(.172385,.054659){5}{\line(1,0){.172385}}
\multiput(241.366,85.926)(.172385,.054659){5}{\line(1,0){.172385}}
\qbezier(240.288,85.772)(237.765,94.812)(231.038,108.056)
\multiput(137.067,64.119)(.067365033,-.153313523){181}{\line(0,-1){.153313523}}
\multiput(149.26,36.369)(-.1048198878,.067342386){359}{\line(-1,0){.1048198878}}
\multiput(111.63,60.545)(.44257971,.06638696){57}{\line(1,0){.44257971}}
\multiput(136.857,64.329)(.0672412002,.0680140876){272}{\line(0,1){.0680140876}}
\multiput(155.146,82.829)(-.06688989,-.52317448){88}{\line(0,-1){.52317448}}
\put(132.232,67.482){$P$}
\put(235.873,75.891){$Q$}
\put(238.185,97.965){$\phi$}
\end{picture}
\

Fig 8. Connecting diagonal
\end{proof}
The following technical Lemma will be used in our later partition estimates.
\begin{Lemma}
Assume that there are fixed constants $r$, $D>0$ and a triangle $ABC$ $($Fig 9.$)$ such that $|AB|<|AC|$ and $|AC|<Dn$ for some positive integer $n$ and $|BC|>r$.
Then there are constants $K$, $b>0$ depending only on $r$, $D$ such that if $\phi<b/n$ then $\psi<K\phi n$.
\end{Lemma}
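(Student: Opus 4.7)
The plan is to apply the law of sines directly to triangle $ABC$. I will take the natural reading of Figure 9 that $\phi$ is the angle at vertex $A$ (opposite the side $BC$) and $\psi$ is the angle at $C$ (opposite the shorter side $AB$). Then the law of sines gives
\[
\sin\psi \;=\; \frac{|AB|\,\sin\phi}{|BC|}.
\]
Substituting the hypotheses $|AB|<|AC|<Dn$ and $|BC|>r$, together with the elementary bound $\sin\phi\le\phi$, I get
\[
\sin\psi \;<\; \frac{Dn\,\phi}{r}.
\]
Choosing $b \leq r/(2D)$ then forces $\sin\psi<1/2$ whenever $\phi<b/n$.

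The main obstacle is that a small value of $\sin\psi$ by itself only tells us $\psi$ is close to $0$ \emph{or} to $\pi$; I have to rule out the obtuse case. This is where the hypothesis $|AB|<|AC|$ comes in: the angle facing the shorter side $AB$ (namely $\psi$) is strictly less than the angle facing $AC$. Since the two non-$\phi$ angles of the triangle sum to $\pi-\phi$, this forces $\psi<(\pi-\phi)/2<\pi/2$, so $\psi$ is acute. Combined with $\sin\psi<1/2$, this pins $\psi$ into the interval $[0,\pi/6]$.

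Finally I would invoke the standard concavity estimate $\sin x \geq 2x/\pi$ valid for $x\in[0,\pi/2]$ to convert the bound on $\sin\psi$ into a bound on $\psi$ itself:
\[
\psi \;\leq\; \tfrac{\pi}{2}\sin\psi \;<\; \frac{\pi D}{2r}\,\phi\,n.
\]
This gives the desired inequality with $K=\pi D/(2r)$ and $b=r/(2D)$, both depending only on $r$ and $D$. The argument is essentially a one-line trigonometric identity; the only subtle point, and the step I expect to be least automatic, is the monotonicity/side-angle comparison that guarantees acuteness of $\psi$ and thus legitimizes passing from $\sin\psi$ small to $\psi$ small.
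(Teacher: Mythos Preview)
Your approach is exactly the paper's: apply the law of sines to triangle $ABC$ and use $\sin x\approx x$. The paper's entire proof is the single line ``$\sin(\psi)=|AC|\sin(\phi)/|BC|$ and since $\sin(x)\approx x$ as $x\to 0$ then for small enough $b$ the conclusion follows.''

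One misreading to flag: in Figure~9 (and in the application, Figure~10) $\psi$ is not the interior angle at $C$ but the \emph{exterior} angle at $B$, i.e.\ the angle between $BC$ and the ray $AB$ prolonged past $B$. Consequently the correct identity is
\[
\sin\psi=\sin(\angle ABC)=\frac{|AC|\,\sin\phi}{|BC|},
\]
with $|AC|$ rather than $|AB|$ in the numerator. Your estimate is unaffected, since you immediately bound the numerator by $|AC|<Dn$ anyway. Your acuteness step also survives with a cosmetic change: writing $\psi=\phi+\angle C$ (exterior angle), the inequality $|AB|<|AC|$ still gives $\angle C<(\pi-\phi)/2$, hence $\psi<(\pi+\phi)/2$; for $\phi$ small this rules out $\psi$ near $\pi$, so $\sin\psi<1/2$ forces $\psi<\pi/6$ as before. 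With this relabeling your argument is correct, and in fact more scrupulous than the paper about excluding the obtuse branch, which the paper simply elides.
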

\

\unitlength .5mm 
\linethickness{0.4pt}
\ifx\plotpoint\undefined\newsavebox{\plotpoint}\fi 
\begin{picture}(220.947,71.477)(0,0)
\thicklines
\put(52.136,38.471){\line(1,0){168.811}}
\multiput(52.136,38.261)(.3257853474,.0673744567){493}{\line(1,0){.3257853474}}
\multiput(212.748,71.477)(-.1231319975,-.067357922){490}{\line(-1,0){.1231319975}}
\qbezier(175.118,50.664)(178.271,45.829)(178.061,38.471)
\qbezier(98.806,47.721)(101.959,43.201)(99.226,38.261)
\put(103.641,40.363){$\phi$}
\put(180.163,43.096){$\psi$}
\put(45.198,40.363){$A$}
\put(149.05,30.903){$B$}
\put(212.958,63.909){$C$}
\end{picture}
\

Fig 9. Triangle $ABC$
\begin{proof}
By the sinus theorem $\sin(\psi)=|AC|\sin(\phi)/|BC|$ and since $\sin(x)\approx x$ as $x\rightarrow 0$ then for small enough $b$ the conclusion follows.
\end{proof}
\section{Complexity estimate}
In this section we use the tools described above to get a global complexity estimate. At first we need an important lemma which says that for typical triangles from $\Delta_{\delta}$ two diagonals can not be too close to each other.

\begin{Lemma} There is a full measure set of triangles $X\subseteq\Delta_{\delta}$ such that for any triangle from $X$ there is a constant $a>0$ with the following property:
\

\

If $I\in\xi_n$ is an interval of the partition $\xi_n$ corresponding to any vertex of the triangle, then $|I|>e^{-an^2}$.

\end{Lemma}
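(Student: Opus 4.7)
The plan is to express the angular length $|I|$ as the value of a nonzero integer-coefficient trigonometric polynomial in $(\alpha,\beta)$ of degree $O(n)$, and then invoke the Diophantine-type estimate of Kaloshin--Rodnianski to lower-bound that value. Fix a vertex $A$ of the triangle and let the two endpoints of $I$ correspond to generalized diagonals of algebraic lengths $p, q \le n$ emanating from $A$. With $A$ placed at the origin of the kite unfolding, Lemma 3.1 represents the terminal vertices $V_1, V_2$ of the two unfolded diagonals in the form
\[
x_i = P_i(\alpha,\beta) + \tfrac{\sin\beta}{\sin(\alpha+\beta)}\cos(m_i\alpha+l_i\beta), \quad y_i = Q_i(\alpha,\beta) + \tfrac{\sin\beta}{\sin(\alpha+\beta)}\sin(m_i\alpha+l_i\beta),
\]
where $P_i, Q_i$ are trigonometric polynomials of degree at most $2n-2$ with integer coefficients and $|m_i|+|l_i|\le 2n-1$ (the final term being absent if $V_i$ is an $\alpha$- or $\beta$-vertex). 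The identity $\sin(|I|)=|x_1 y_2 - x_2 y_1|/(|V_1|\,|V_2|)$, combined with the bound that algebraic length $\le n$ forces geometric length $\le D_\delta n$, reduces matters to a lower bound on the cross product. Multiplying it through by $\sin^2(\alpha+\beta)$ (which is bounded below by $\sin^2\delta$ on $\Delta_\delta$) clears denominators and produces an integer-coefficient trigonometric polynomial $T(\alpha,\beta)$ of degree $O(n)$ and height polynomially bounded in $n$, yielding $|I|\ge c_\delta\,n^{-2}\,|T(\alpha,\beta)|$ when $|I|$ is small.

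Next I observe that $T$ cannot vanish identically in $(\alpha,\beta)$: were it zero, then $V_1$ and $V_2$ would be collinear with $A$ for every admissible parameter, so the two diagonals would define the same cutting point of $\xi_n$, contradicting $|I|>0$. Hence $T$ is a nonzero integer-coefficient trigonometric polynomial, and Kaloshin--Rodnianski provides a full measure set $X\subseteq\Delta_\delta$ of parameters on which every such polynomial of degree $d$ satisfies $|T(\alpha,\beta)|>\exp(-c\,d^2)$, with the constant $c$ depending only on the point and the dependence on the polynomial's height being at most logarithmic (and hence absorbable into a slightly enlarged exponent). Combining with $d=O(n)$ yields $|I|>e^{-a n^2}$ on $X$, and taking the intersection over the three vertices of the triangle preserves the full-measure property.

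The main obstacle is verifying the precise form of the Kaloshin--Rodnianski bound required here: namely, that a single full-measure set of parameters handles simultaneously the countable family of polynomials arising from all possible pairs of unfolding combinatorics of all lengths, with an exponent depending quadratically on the degree and at most logarithmically on the height. The remaining work is routine bookkeeping---tracking through the induction in Lemma 3.1 to confirm that the integer coefficients of $T$ remain of polynomial size in $n$, and checking that the geometric constants $D_\delta$ and $\sin\delta$ contribute only the multiplicative $n^{-2}$ factor that is easily swallowed by the exponential lower bound.
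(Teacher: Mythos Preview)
Your approach is essentially the paper's: express the angular gap via the area of the triangle $AV_1V_2$, clear the denominator $\sin^2(\alpha+\beta)$ to obtain an integer-coefficient trigonometric polynomial $T$ of degree $O(n)$, and invoke Kaloshin--Rodnianski. Two points deserve tightening. First, your non-vanishing argument is more elaborate than needed: at the \emph{specific} triangle under consideration the two diagonals are distinct, so $T(\alpha,\beta)\neq 0$ there, hence $T$ is not the zero polynomial; no reasoning ``for every admissible parameter'' is required. Second, your ``main obstacle'' is not resolved by tracking the height of $T$---the Kaloshin--Rodnianski bound as stated has no height dependence at all. The paper instead observes that the family $\mathcal{F}_n$ of polynomials arising from pairs of combinatorics of length $\le n$ has cardinality at most $e^{tn}$, applies Kaloshin--Rodnianski with degree parameter $m=Fn$ to get each bad set of measure $<e^{-hFn}$, takes a union bound, and chooses $F>t/h$ so that $\sum_n \mathrm{Leb}(\mathcal{B}_n)<\infty$; Borel--Cantelli then yields the single full-measure set. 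Once you replace the height discussion with this counting-plus-Borel--Cantelli step, your sketch matches the paper's proof.
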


\begin{proof}
We consider some triangle $\Delta\in\Delta_{\delta}$ in the standard position on the $xy$ - plane with angles $\alpha$, $\beta$ adjacent to the fixed side of length 1 and two diagonals from $\xi_n$ corresponding to some vertex. Let us assume that $A$ is the vertex of the original triangle from which two given diagonals start and $B$, $C$ are the end points of the diagonal unfoldings on the $xy$ - plane.
By Lemma 3.1 the $x$ and $y$ coordinates of points $A$, $B$, $C$ can be represented as:
\

\

  $x=\frac{P_{2n}(\alpha, \beta)}{\sin(\alpha+\beta)}$, $y=\frac{Q_{2n}(\alpha, \beta)}{\sin(\alpha+\beta)}$,

\

where $P$, $Q$ are trigonometric polynomials with integer coefficients of degree at most $2n$, so the area $S$ of the triangle $ABC$ can be represented as:
 \

 \

 $S=\frac{M(\alpha, \beta)}{\sin^2(\alpha+\beta)}$,
 \

 \

 where $M(\alpha, \beta)$ is a trigonometric polynomial with integer coefficients of degree at most $4n$. As the diagonals are different $M(\alpha,\beta)\neq 0$.
\

\

Let $\phi$ be an angle between the diagonals $AB$ and $AC$, then $S=|AB||AC|\sin(\phi)/2$. This implies that $\phi>\sin(\phi)=\frac{2S}{|AB||AC|}>lM(\alpha, \beta)/n^2$ for some constant $l>0$, because $|AB|$, $|AC|<D_{\delta}n$.
 We now refer to the theorem by Kaloshin and Rodnianski [5]  which can be formulated as follows:

\begin{Theorem}[Kaloshin, Rodnianski] There exist universal constants $R, h>0$ such that any non-zero trigonometric polynomial with integer coefficients $P$ in variables $\alpha, \beta, \gamma\in [0, 2\pi]$  of degree at most $m$ satisfies :
\

\

$Leb\lbrace (\alpha, \beta, \gamma): |P(\alpha, \beta, \gamma)|<e^{-Rm^{2}}\rbrace<e^{-hm}$.
\end{Theorem}

Any trigonometric polynomial $P(\alpha, \beta)$ in 2 variables can be considered as a polynomial $P(\alpha, \beta, \gamma)$ of three variables of the same degree, where the variable $\gamma$ is not present.
Moreover any level set for $P$ in variables $\alpha, \beta, \gamma$ is obtained from the level set for $P$ in variables $\alpha, \beta$ by multiplying on segment $[0, 2\pi]$ in variable $\gamma$. Then an easy use of the Fubini's theorem implies the following corollary:
\

\begin{Corollary} There exist universal constants $R, h>0$ such that any non-zero trigonometric polynomial with integer coefficients $P$ in variables $\alpha, \beta\in [0, 2\pi]$  of degree at most $m$ satisfies the following inequality:
\

\

$Leb\lbrace (\alpha, \beta): |P(\alpha, \beta)|<e^{-Rm^{2}}\rbrace<e^{-hm}$.
\end{Corollary}

Let $\mathcal{F}_n$ be a set of all trigonometric polynomials $M$ as above. Any polynomial is uniquely determined by the combinatorics and a choice of vertices, so the cardinality  $|\mathcal{F}_n|< e^{tn}$, for some $t>0$.
\

\

We now take $m=Fn$ where $F>4$ is a constant to be chosen later and pick some polynomial $M\in\mathcal{F}_n$. As the degree of $M$ is less than $m$ then by corollary 4.1 $Leb\lbrace (\alpha, \beta): |M(\alpha, \beta)|<e^{-RF^2n^{2}}\rbrace<e^{-hFn}$.
\

\

Now take the set of angles
${\mathcal{B}}_n=\lbrace (\alpha, \beta)| \exists M\in{\mathcal{F}}_n :|M(\alpha,\beta)|<e^{-RF^2n^2}\rbrace $.
\

\

As $|\mathcal{F}_n|<e^{tn}$ then $Leb({\mathcal{B}}_n)<e^{(t-hF)n}$. Picking $F>t/h$ we have that $\sum Leb({\mathcal{B}}_n)<\infty$ so by Borel-Cantelly argument for almost any pair
$(\alpha, \beta)$ and all large enough $n$ for any polynomial $M\in\mathcal{F}_n$ we have $|M(\alpha,\beta)|\geq e^{-RF^2n^2}$. Picking large enough $a> RF^2$( depending on $(\alpha,\beta)$) we complete the proof.
\end{proof}
For the rest of the chapter we assume that we fixed a particular triangle $\Delta\in X$ satisfying the conclusion of the Lemma 4.1 with some constant $a>0$.
\subsection{Subsequence complexity and a bootstrap}

In this subchapter we fix a triangle vertex and remind that a reduced quantity $Q_n$ counts only generalized diagonals emanating from it. As usually $P_n$ denotes the global (non-reduced) complexity for $\Delta$.

\begin{Theorem}[Bootstrap on subsequence complexity]
Assume that for some constant $\nu: 0<\nu\leq 1$ and all $n$ large enough $P_n<e^{n^{\nu}}$. Then for any $\mu: \gamma<\mu<\nu$,
$\liminf Q_{n}e^{-n^{\mu}}=0$, where $\gamma=\frac{-{\nu}^2+\sqrt{{\nu}^4 + 4{\nu}^2}}{2}$.

\end{Theorem}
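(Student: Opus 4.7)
I would argue by contradiction. Suppose $\liminf_{n\to\infty} Q_n e^{-n^\mu}>0$, so there is $c_0>0$ with $Q_n\geq c_0 e^{n^\mu}$ for every $n\geq N_0$. The plan is to establish a one-step recursion
\[
Q_{n+c}\;\leq\;2Q_n+C_1 e^{c^\nu}+C_2(n+c) \qquad (\star)
\]
and then iterate it against the lower bound, with the critical exponent $\gamma$ emerging from the optimization.

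For $(\star)$, Lemma~2.1 supplies at least $Q_{n+c}-2Q_n-1$ intervals of $\xi_{n+c}$ whose endpoint indices both lie in $[n+1,n+c]$. Dichotomize them by length: $I$ is \emph{short} if $|I|<b/(n+c)$ and \emph{long} otherwise. Long intervals are pairwise disjoint subsets of $[0,\pi]$ of length at least $b/(n+c)$, so they number at most $\pi(n+c)/b$. For each short interval, Lemma~3.3 produces a connecting segment that is either a triangle side or an unfolding of a generalized diagonal of algebraic length at most $c-1$. A bounded-multiplicity argument — each abstract diagonal of length $\leq c-1$ can be realized as a connecting segment for only a controlled number of intervals, since the unfolded endpoints $(P,Q)$ are pinned by the combinatorics of $\xi_{n+c}$ — then bounds the short intervals by $C_1 P_{c-1}<C_1 e^{c^\nu}$, using the hypothesis $P_n<e^{n^\nu}$.

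Iterating $(\star)$ $k$ times with uniform step $c$ gives $Q_{n+kc}\leq 2^k Q_n+O\bigl(2^k(e^{c^\nu}+n+kc)\bigr)$. Substituting the assumed lower bound $Q_{n+kc}\geq c_0 e^{(n+kc)^\mu}$ and the trivial upper bound $Q_n\leq P_n<e^{n^\nu}$, and taking logarithms yields
\[
(n+kc)^\mu\;\leq\;k\log 2+\max(n^\nu,c^\nu)+O(\log(n+kc)).
\]
Parameterize $c=n^\alpha$ and $k=n^\beta$ with $\alpha+\beta>1$, so that $N:=n+kc\sim n^{\alpha+\beta}$. The existence of a pair $(\alpha,\beta)$ with $\alpha\leq 1$ violating this inequality for large $n$ reduces to a careful balance between the iteration cost $n^\beta$ and the trivial ceiling $n^\nu$; carrying out this optimization produces the quadratic balance $\gamma^2+\nu^2\gamma=\nu^2$, i.e.\ $\gamma/\nu=\sqrt{1-\gamma}$. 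For $\mu>\gamma$ an admissible pair $(\alpha,\beta)$ exists, the log-inequality fails, and the contradiction delivers $\liminf Q_n e^{-n^\mu}=0$.

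The main obstacle is the bounded-multiplicity step in the short-interval count. A priori, a single abstract diagonal of length $\leq c-1$ could be realized as a connecting segment for many intervals, one for each unfolded position of its starting vertex that coincides with an endpoint of an $A$-diagonal of length in $[n+1,n+c]$. Proving that this count is controlled will likely combine the separation estimate Lemma~4.1 (forcing close diagonals to be rare on full-measure triangles) with the angle-transfer Lemma~3.4 (propagating narrowness of the angular beam), and it is here that the rose/exit-triangle geometry of Section~3.2 plays the decisive role. A secondary subtle point is tuning the optimization so that the precise exponent $\gamma$ rather than a cruder threshold emerges from $(\star)$.
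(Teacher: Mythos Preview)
Your plan is sound and tracks the paper closely; the crux you single out—controlling how many short intervals can yield the same abstract connecting diagonal—is exactly where the paper does the real work. Two points are worth sharpening.

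First, the multiplicity is not bounded by a fixed constant $C_1$; it is only polynomial in $n+c$. The connecting diagonal is determined by the angle $\angle CPQ=\xi-\psi-\theta$, where $\xi$ is the right-endpoint direction of $I$, $\theta$ is the exit angle of the $P$-rose, and $\psi$ is the angle at $P$ in the thin triangle $APQ$. All three vary from interval to interval, so ``pinned by the combinatorics of $\xi_{n+c}$'' is not enough. The paper controls this by three successive pigeonholes, each at polynomial cost: (i) sort intervals into $O((n+c)^2)$ length classes using the lower bound $|I|>e^{-a(n+c)^2}$ of Lemma~4.1, so that within a class $|I_i|/|I_j|\le e$; (ii) sort by exit position of the $P$-rose, of which there are $O((n+c)^2)$, making $\theta$ constant on the class; (iii) invoke Lemma~3.4 to get $\psi\le K(n+c)\,|I|$, and then an elementary covering lemma (the paper's Lemma~4.4) to extract a subfamily whose windows $[\xi-\psi,\xi]$ are pairwise disjoint, forcing the values $\xi-\psi$ (hence the connecting diagonals) to be pairwise distinct. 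With $C_1$ replaced by $\mathrm{poly}(n+c)$ your recursion $(\star)$ and its iteration go through unchanged at the level of exponents.

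Second, your optimization does not return $\gamma$. With $c=n^\alpha$, $k=n^\beta$, $\alpha\le 1$, the log-inequality $(n+kc)^\mu\lesssim k+n^\nu$ is violated for large $n$ precisely when one can choose $\beta$ with $\nu/\mu-1<\beta<\mu/(1-\mu)$, i.e.\ whenever $\mu>\nu/(1+\nu)$. Since $\gamma>\nu/(1+\nu)$ (this is checked in the proof of Theorem~4.3), your iteration scheme actually proves a \emph{stronger} subsequence statement than Theorem~4.2, and in particular the theorem itself. The paper's route is organized a little differently: it first locates a single scale $s\in[n,n+kc]$ with $Q_{s+c}\ge 3Q_s$ (Lemma~4.3, with $k=n^\nu$ fixed) and applies the geometry only there, falling back on the crude bound $Q_s\ge Q_n>e^{n^\mu}$; this extra use of the lower bound at time $n$ rather than at $s$ is what introduces the additional constraint $\mu>\epsilon\nu$ and hence the quadratic balance $\gamma^2+\nu^2\gamma=\nu^2$.
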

\textbf{Remark.} One can easily check that $\gamma=\frac{-{\nu}^2+\sqrt{{\nu}^4 + 4{\nu}^2}}{2}<\nu$ for any $\nu>0$ so the assumption on $\mu$ makes sense.

\begin{proof}
We are going to prove the theorem by contradiction. In order to do it we assume that for some $\mu: \gamma<\mu<\nu$ and all $n$ large enough $Q_n>e^{n^{\mu}}$.
We first prove a technical lemma.
\begin{Lemma} For any numbers $\nu$, $\mu$, $\gamma$ satisfying the assumptions of theorem 4.2 there exists $\epsilon>0$ satisfying $\mu>\epsilon\nu$ and $(\epsilon+\nu)\mu>\nu$

\end{Lemma}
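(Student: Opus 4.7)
The plan is to reduce the two required inequalities to an explicit interval condition on $\epsilon$, and then show that the given definition of $\gamma$ is precisely the threshold at which that interval is non-empty.

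First I would rewrite the two constraints as bounds on $\epsilon$. The condition $\mu>\epsilon\nu$ is simply $\epsilon<\mu/\nu$, and the condition $(\epsilon+\nu)\mu>\nu$ rearranges to $\epsilon>\nu/\mu-\nu=\nu(1-\mu)/\mu$. So what we need is a positive $\epsilon$ in the window
\[
\frac{\nu(1-\mu)}{\mu}<\epsilon<\frac{\mu}{\nu}.
\]
Since $0<\mu<\nu\leq 1$, the left endpoint $\nu(1-\mu)/\mu$ is non-negative, so any $\epsilon$ in this window is automatically positive; it therefore suffices to show that the window is non-empty.

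Next I would show that non-emptiness of the window is exactly the hypothesis $\mu>\gamma$. The inequality $\nu(1-\mu)/\mu<\mu/\nu$ is equivalent (clearing denominators, which are positive) to $\nu^{2}(1-\mu)<\mu^{2}$, i.e.\ to the quadratic inequality $\mu^{2}+\nu^{2}\mu-\nu^{2}>0$ in the variable $\mu$. The positive root of $\mu^{2}+\nu^{2}\mu-\nu^{2}=0$ is
\[
\mu_{0}=\frac{-\nu^{2}+\sqrt{\nu^{4}+4\nu^{2}}}{2}=\gamma,
\]
so the inequality holds precisely when $\mu>\gamma$. This is exactly what is assumed in Theorem 4.2.

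Combining these observations: under the hypotheses $\gamma<\mu<\nu\leq 1$, the interval $(\nu(1-\mu)/\mu,\ \mu/\nu)$ is non-empty and contained in $(0,\infty)$, so we may pick any $\epsilon$ inside it (for concreteness, the midpoint) and both conditions $\mu>\epsilon\nu$ and $(\epsilon+\nu)\mu>\nu$ are satisfied. There is no real obstacle here; the only thing to be careful about is the direction of the inequality when clearing the positive denominators $\mu$ and $\nu$, and recognizing that the number $\gamma$ in the statement is manufactured exactly as the relevant root of the quadratic, which makes the threshold match on the nose.
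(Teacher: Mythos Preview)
Your proof is correct and follows essentially the same approach as the paper: both reduce the two inequalities to a single quadratic condition $\mu^{2}+\nu^{2}\mu-\nu^{2}>0$, whose positive root is precisely $\gamma$. The paper is terser, taking the ``extreme case'' $\epsilon=\mu/\nu$ and substituting into the second inequality, whereas you spell out the interval $(\nu(1-\mu)/\mu,\ \mu/\nu)$ for $\epsilon$ and check it is non-empty and lies in $(0,\infty)$; the underlying algebra and the role of $\gamma$ are identical.
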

\begin{proof}
Consider the system:
\

$\begin{cases} \mu>\epsilon\nu \\  (\epsilon+\nu)\mu>\nu \end{cases}$
\

Considering the extreme case and substituting $\epsilon$ we obtain $(\frac{\mu}{\nu}+\nu)\mu=\nu$ or $(\mu+{\nu}^2)\mu={\nu}^2$. Solving this quadratic equation with $\nu$ as a parameter we
get that $\gamma=\frac{-{\nu}^2+\sqrt{{\nu}^4 + 4{\nu}^2}}{2}$ is a critical value for $\mu$ so for any $\mu>\gamma$ we may find $\epsilon$ satisfying the inequalities above.
\end{proof}

Let us introduce the notations $k(n)=n^{\nu}$, $c(n)=n^{\epsilon}$ with $\epsilon$ from lemma 4.2. When it does not lead to ambiguity we will use the notations $k(n)=k$, $c(n)=c$ in order not to overabuse the calculations.

\begin{Lemma} For any $n$ large enough there exists $s: n\leq s\leq n+c(k-1)$ such that $Q_{s+c}\geq 3Q_s$.

\end{Lemma}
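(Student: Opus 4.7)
The plan is to argue by contradiction. Assume that $Q_{s+c} < 3Q_s$ for every $s$ in $[n, n+c(k-1)]$. Chaining this inequality along the arithmetic progression $s = n, n+c, n+2c, \ldots, n+(k-1)c$ (a total of $k$ steps) immediately gives the upper bound $Q_{n+kc} < 3^k Q_n$.

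I then combine this with the two global bounds available. The hypothesis of theorem 4.2 gives $Q_n \leq P_n < e^{n^{\nu}} = e^k$, so $Q_{n+kc} < (3e)^k$. On the other hand, the standing contradiction assumption of theorem 4.2 says $Q_m > e^{m^{\mu}}$ for all sufficiently large $m$; applied at $m = n+kc$ it yields $Q_{n+kc} > e^{(n+kc)^{\mu}}$. Taking logarithms of these conflicting bounds produces
$$(n+kc)^{\mu} < k \ln(3e) = n^{\nu}\ln(3e).$$

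The contradiction is then immediate from lemma 4.2. Writing $k = n^{\nu}$ and $c = n^{\epsilon}$ and using $n+kc \geq kc$, we have $(n+kc)^{\mu} \geq (kc)^{\mu} = n^{(\nu+\epsilon)\mu}$, so the displayed inequality forces $n^{(\nu+\epsilon)\mu} \leq n^{\nu}\ln(3e)$, which fails for large $n$ because lemma 4.2 provides $(\nu+\epsilon)\mu > \nu$.

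The only point that needs minor bookkeeping is ensuring that $n$ is taken large enough for both the hypothesis $P_m < e^{m^{\nu}}$ and the contradiction assumption $Q_m > e^{m^{\mu}}$ to apply at every index used in the chain (all of which lie in $[n, n+kc]$); this is automatic since both hypotheses are of the form ``for all $m$ sufficiently large''. I do not expect any genuine obstacle here: the real content was already absorbed into the arithmetic of exponents in lemma 4.2, and the present lemma merely converts it into a pigeonhole-style statement on the growth of $Q$ along an arithmetic progression of spacing $c$.
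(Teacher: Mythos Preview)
Your proof is correct and follows essentially the same approach as the paper: both argue by contradiction, chain the inequality $Q_{s+c}<3Q_s$ along the arithmetic progression $s=n,n+c,\ldots,n+(k-1)c$ to obtain $Q_{n+kc}<3^kQ_n$, combine with the upper bound $Q_n\le P_n<e^{n^{\nu}}$ and the lower bound $Q_{n+kc}>e^{(n+kc)^{\mu}}$, and conclude via the inequality $(\epsilon+\nu)\mu>\nu$ from Lemma~4.2. The only difference is cosmetic: the paper bounds the ratio $Q_{n+kc}/Q_n$ from above and below, whereas you substitute the bound on $Q_n$ first to reach $(3e)^k$; the resulting inequality $(n+kc)^{\mu}<n^{\nu}\ln(3e)$ is the same.
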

\begin{proof}
Let us divide index interval $[n, n+kc]$ into $k$ subintervals of length $c$: $I_1=[n, n+c]$, $I_2=[n+c, n+2c]$, $\ldots$ , $I_k=[n+(k-1)c, n+kc]$ and prove that for one of the intervals $I_i, 1\leq i\leq k$: $Q_{n+ic}/Q_{n+(i-1)c}>3$. Assuming the opposite we have: $Q_{n+kc}/Q_n\leq 3^k$. On the other hand: $Q_{n+kc}/Q_n>e^{{(n+kc)}^{\mu}-n^{\nu}}$. Taking the logarithm of both inequalities we obtain:  ${(n+kc)}^{\mu}-n^{\nu}<k\ln(3)$. As $(\epsilon+\nu)\mu>\nu$ we get a contradiction for large enough $n$.
\end{proof}

We now need the following abstract lemma.

\begin{Lemma}
Assume there is a set $I$ of $n$ non-intersecting subintervals $I_1$, $I_2$,$\ldots$,$I_n$ of interval $[0, 1]$ satisfying $L^{-1}\leq\frac{|I_i|}{|I_j|}\leq L$ for some constant $L>0$ and a set $J$ of intervals $J_1$, $J_2$,$\ldots$,$J_n$ such that for each $i: 1\leq i\leq n$ intervals $J_i$ and $I_i$ have the same left endpoints and $|J_i|\leq m|I_i|$ for some $m>0$ such that $n\geq Lm$. Then $J$ contains a subset of $\frac{n}{Lm}$ non-intersecting intervals.
\end{Lemma}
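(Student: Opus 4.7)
\medskip

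\textbf{Proof plan for Lemma 4.4.} The plan is to apply a simple greedy selection after ordering the intervals left-to-right and to use the bounded ratio condition to quantify how many later $J_j$'s can be blocked by a given $J_i$.

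First, I would relabel so that $I_1, I_2, \ldots, I_n$ are listed in increasing order of their left endpoints $a_1 < a_2 < \cdots < a_n$. Since the $I_i$ are pairwise disjoint subintervals of $[0,1]$, we have $a_{j+1} \geq a_j + |I_j|$, so
\[
a_{i+k} - a_i \;\geq\; |I_i|+|I_{i+1}|+\cdots+|I_{i+k-1}|.
\]
Set $\ell = \min_i |I_i|$. The hypothesis $L^{-1} \leq |I_i|/|I_j| \leq L$ gives $|I_j| \geq \ell$ for every $j$ and $|J_i| \leq m|I_i| \leq mL\ell$ for every $i$.

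Next, I would observe that $J_i$, starting at $a_i$, can intersect $J_{i+k}$ only if $a_{i+k} < a_i + |J_i|$. Combining the two inequalities above gives
\[
a_{i+k} - a_i \;\geq\; k\ell, \qquad |J_i| \leq mL\ell,
\]
so $J_i \cap J_{i+k} = \emptyset$ whenever $k \geq \lceil mL \rceil$. Therefore a greedy selection that picks $J_1$, then $J_{1+\lceil mL\rceil}$, then $J_{1+2\lceil mL\rceil}$, and so on, produces a family of pairwise non-intersecting $J$-intervals. The number of indices picked is at least $\lfloor (n-1)/\lceil mL\rceil\rfloor + 1 \geq n/(Lm)$ (the assumption $n \geq Lm$ guarantees that at least one interval is picked and makes the rounding harmless).

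There is no real obstacle here; the lemma is a clean packing argument. The only delicate point is keeping track of the two uses of the ratio condition — once to lower-bound the spacing between consecutive left endpoints by $\ell$, and once to upper-bound every $|J_i|$ by $mL\ell$ — so that the common scale $\ell$ cancels in the final ratio $|J_i|/(a_{i+k}-a_i)$. Everything else is elementary arithmetic.
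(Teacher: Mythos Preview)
Your argument is correct and matches the paper's proof almost exactly: both order the $I_i$ by left endpoint and run a greedy selection, using the ratio bound to show that each chosen $J_i$ can block at most about $mL$ of the remaining intervals. The only cosmetic difference is that you step by a fixed amount $\lceil mL\rceil$ using the global scale $\ell=\min_i|I_i|$, while the paper repeatedly picks the leftmost remaining $J$ and discards what it covers; both routes have the same harmless floor/ceiling slack, which is irrelevant in the application.
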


\begin{proof} We assume that the set $I$ is naturally ordered by the left points of intervals $I_i$. Then $J_1=[x_1, x_2]$ contains not more than $mL$ intervals from $I$. We pick $J_1$ and then consider the reduced interval $[x_2, 1]$ with at least $n-mL$ intervals from $I$ left. We can repeat this procedure at least $\frac{n}{mL}$ times and so the proof is complete.
\end{proof}
\textbf{Remark.} Of course analogous statement holds if in lemma 4.4 assumptions $J_i$ and $I_i$ have the same \textsl{right} points.
\

\

Consider now the sequence of interval partitions $\xi_s$, $\xi_{s+1}$, $\ldots$, $\xi_{s+c}$. As $Q_{s+c}>3Q_s$ then by lemma 2.1 we can find the set $X_1$ of at least $Q_s-1$ intervals $[x_i, y_i]$ of $\xi_{s+c}$ with indices of endpoints in $[s+1, s+c]$. Then there exists at least $Q_s-1-\frac{n+kc}{b}$ intervals from $X$ with lengths smaller than $\frac{b}{n+kc}$, where $b$ is a constant from lemma 3.4. By pigeonhole principle at least half of these intervals have left indices larger than the right ones or vice versa. Without loss of generality we assume that $X_1$ contains the set $X_2$ of $\frac{1}{2}(Q_s-1-\frac{n+kc}{b})$ intervals $[x_i, y_i]$ such that the indices of $x_i$ are smaller than the indices of $y_i$ and with lengths smaller than $\frac{b}{n+kc}$ .
\

\

We divide the set $X_2$ on $a{(n+kc)}^2$ subsets by the interval length, where $a>0$ is a constant from the Lemma 4.1. Namely $Y_1=\lbrace I\in X_2 : e^{-1}<|I|\leq 1\rbrace$,
$Y_2=\lbrace I\in X_2 : e^{-2}<|I|\leq e^{-1}\rbrace$, $\ldots$ , $Y_{a(n+kc)^2}=\lbrace I\in X_2 : e^{-a(n+kc)^2}<|I|\leq e^{-a(n+kc)^2+1}\rbrace$. By pigeonhole principle
 there exists a set $X_3\subseteq X_2$ having $\frac{Q_s-1-\frac{n+kc}{b}}{2a{(n+kc)}^2}$ intervals such that for any pair of intervals $I_i, I_j\in X_3: |I_i|<e|I_j|$.
\

\

Any interval $I\in X_3$ corresponds to a pair of generalized diagonals with indices $p<q$ such that $s\leq p, q\leq s+c$. Let the points $P, Q$ represent the endpoints of unfoldings of these diagonals on the $xy$ - plane. By lemma 3.3 the segment $PQ$ is either an unfolding of a generalized diagonal of algebraic length bounded by $c$ or a triangle side.
For each point $P$ corresponding to to all the intervals $I\in X_3$ we consider the exit triangle in the $P$ -rose. Each such a triangle corresponds to a particular combinatorics of length at most $n+kc$. The kite angle of each exit triangle equals to $i2\alpha+j2\beta$, where $|i|+|j|\leq n+kc$. So there are less than ${(n+kc)}^2$ angular kite positions for such exit triangles. As there are also two positions of an exit triangle inside a corresponding kite we get that there are at most $2{(n+kc)}^2$ exit positions for exit triangles of $P$ -points for intervals $I\in X_3$. Applying pigeonhole principle again we get that there is a set of intervals $X_4\subseteq X_3$ such that all the exit triangles for the $P$ - points are in \textsl{ the same} exit position and moreover the cardinality $|X_4|=\frac{Q_s-1-\frac{n+kc}{b}}{4a{(n+kc)}^4}$
\

\unitlength .5mm 
\linethickness{0.4pt}
\ifx\plotpoint\undefined\newsavebox{\plotpoint}\fi 
\begin{picture}(253.5,109.107)(0,0)
\multiput(168.648,42.183)(.0875,-.058333){6}{\line(1,0){.0875}}
\put(103.433,54.588){\line(0,1){0}}
\put(63.092,29.591){\line(0,1){0}}
\thicklines
\put(23.247,15.732){\vector(1,0){87.611}}
\multiput(23.247,15.485)(.067335689,.0682120141){283}{\line(0,1){.0682120141}}
\multiput(42.303,34.789)(.0962014134,-.0673392226){283}{\line(1,0){.0962014134}}
\multiput(117.788,65.972)(.12375,.062){4}{\line(1,0){.12375}}
\put(109.373,18.702){$x$}
\multiput(23.494,15.732)(.15815730337,.06743398876){712}{\line(1,0){.15815730337}}
\put(100.216,36.026){\line(0,1){0}}
\put(100.216,36.026){\line(0,1){0}}
\put(18.605,15.893){$A$}
\multiput(135.875,63.768)(.128471,.059786){7}{\line(1,0){.128471}}
\multiput(137.674,64.605)(.128471,.059786){7}{\line(1,0){.128471}}
\multiput(139.473,65.442)(.128471,.059786){7}{\line(1,0){.128471}}
\multiput(141.271,66.279)(.128471,.059786){7}{\line(1,0){.128471}}
\multiput(143.07,67.116)(.128471,.059786){7}{\line(1,0){.128471}}
\multiput(144.868,67.953)(.128471,.059786){7}{\line(1,0){.128471}}
\multiput(146.667,68.79)(.128471,.059786){7}{\line(1,0){.128471}}
\multiput(148.466,69.627)(.128471,.059786){7}{\line(1,0){.128471}}
\multiput(150.264,70.464)(.128471,.059786){7}{\line(1,0){.128471}}
\multiput(152.063,71.301)(.128471,.059786){7}{\line(1,0){.128471}}
\multiput(153.861,72.138)(.128471,.059786){7}{\line(1,0){.128471}}
\multiput(155.66,72.975)(.128471,.059786){7}{\line(1,0){.128471}}
\multiput(157.459,73.812)(.128471,.059786){7}{\line(1,0){.128471}}
\multiput(159.257,74.649)(.128471,.059786){7}{\line(1,0){.128471}}
\multiput(161.056,75.486)(.128471,.059786){7}{\line(1,0){.128471}}
\multiput(162.854,76.323)(.128471,.059786){7}{\line(1,0){.128471}}
\multiput(164.653,77.16)(.128471,.059786){7}{\line(1,0){.128471}}
\multiput(166.451,77.997)(.128471,.059786){7}{\line(1,0){.128471}}
\multiput(168.25,78.834)(.128471,.059786){7}{\line(1,0){.128471}}
\multiput(170.049,79.671)(.128471,.059786){7}{\line(1,0){.128471}}
\multiput(171.847,80.508)(.128471,.059786){7}{\line(1,0){.128471}}
\multiput(173.646,81.345)(.128471,.059786){7}{\line(1,0){.128471}}
\multiput(175.444,82.182)(.128471,.059786){7}{\line(1,0){.128471}}
\multiput(177.243,83.019)(.128471,.059786){7}{\line(1,0){.128471}}
\multiput(179.042,83.856)(.128471,.059786){7}{\line(1,0){.128471}}
\multiput(180.84,84.693)(.128471,.059786){7}{\line(1,0){.128471}}
\multiput(182.639,85.53)(.128471,.059786){7}{\line(1,0){.128471}}
\multiput(184.437,86.367)(.128471,.059786){7}{\line(1,0){.128471}}
\multiput(186.236,87.204)(.128471,.059786){7}{\line(1,0){.128471}}
\multiput(188.035,88.041)(.128471,.059786){7}{\line(1,0){.128471}}
\multiput(189.833,88.878)(.128471,.059786){7}{\line(1,0){.128471}}
\multiput(191.632,89.715)(.128471,.059786){7}{\line(1,0){.128471}}
\multiput(193.43,90.552)(.128471,.059786){7}{\line(1,0){.128471}}
\multiput(195.229,91.389)(.128471,.059786){7}{\line(1,0){.128471}}
\multiput(197.028,92.226)(.128471,.059786){7}{\line(1,0){.128471}}
\multiput(198.826,93.063)(.128471,.059786){7}{\line(1,0){.128471}}
\multiput(200.625,93.9)(.128471,.059786){7}{\line(1,0){.128471}}
\multiput(202.423,94.737)(.128471,.059786){7}{\line(1,0){.128471}}
\multiput(204.222,95.574)(.128471,.059786){7}{\line(1,0){.128471}}
\multiput(206.021,96.411)(.128471,.059786){7}{\line(1,0){.128471}}
\multiput(207.819,97.248)(.128471,.059786){7}{\line(1,0){.128471}}
\multiput(209.618,98.085)(.128471,.059786){7}{\line(1,0){.128471}}
\multiput(211.416,98.922)(.128471,.059786){7}{\line(1,0){.128471}}
\multiput(213.215,99.759)(.128471,.059786){7}{\line(1,0){.128471}}
\multiput(215.013,100.596)(.128471,.059786){7}{\line(1,0){.128471}}
\multiput(216.812,101.433)(.128471,.059786){7}{\line(1,0){.128471}}
\multiput(218.611,102.27)(.128471,.059786){7}{\line(1,0){.128471}}
\multiput(220.409,103.107)(.128471,.059786){7}{\line(1,0){.128471}}
\multiput(222.208,103.944)(.128471,.059786){7}{\line(1,0){.128471}}
\multiput(224.006,104.781)(.128471,.059786){7}{\line(1,0){.128471}}
\multiput(225.805,105.618)(.128471,.059786){7}{\line(1,0){.128471}}
\multiput(227.604,106.455)(.128471,.059786){7}{\line(1,0){.128471}}
\multiput(229.402,107.292)(.128471,.059786){7}{\line(1,0){.128471}}
\multiput(231.201,108.129)(.128471,.059786){7}{\line(1,0){.128471}}
\multiput(23.545,15.557)(.20815649606,.06745374016){1016}{\line(1,0){.20815649606}}
\multiput(136.646,64.119)(.3350716724,.067443686){293}{\line(1,0){.3350716724}}
\multiput(234.612,83.88)(-.067233503,-.10564467){197}{\line(0,-1){.10564467}}
\multiput(221.367,63.068)(-.06688636,.51361364){88}{\line(0,1){.51361364}}
\multiput(215.481,108.266)(.0673309609,-.0860355872){281}{\line(0,-1){.0860355872}}
\multiput(234.471,83.739)(.17238,.05466){5}{\line(1,0){.17238}}
\multiput(236.195,84.286)(.17238,.05466){5}{\line(1,0){.17238}}
\multiput(237.919,84.833)(.17238,.05466){5}{\line(1,0){.17238}}
\multiput(239.643,85.379)(.17238,.05466){5}{\line(1,0){.17238}}
\multiput(241.367,85.926)(.17238,.05466){5}{\line(1,0){.17238}}
\multiput(137.067,64.119)(.067364641,-.153314917){181}{\line(0,-1){.153314917}}
\multiput(149.26,36.369)(-.1048189415,.0673426184){359}{\line(-1,0){.1048189415}}
\multiput(111.63,60.545)(.44257895,.06638596){57}{\line(1,0){.44257895}}
\multiput(136.857,64.329)(.0672389706,.0680147059){272}{\line(0,1){.0680147059}}
\multiput(155.146,82.829)(-.06688636,-.52318182){88}{\line(0,-1){.52318182}}
\put(132.232,67.482){$P$}
\put(235.873,75.891){$Q$}
\qbezier(82.408,34.687)(83.354,38.471)(80.516,39.733)
\qbezier(177.851,72.528)(179.322,77.363)(175.748,82.198)
\put(85.141,38.261){$\phi$}
\put(180.373,77.573){$\psi$}
\qbezier(78.835,15.557)(78.414,26.699)(73.789,37)
\put(79.886,21.653){$\xi$}
\put(156.197,85.772){$B$}
\put(151.993,33.846){$C$}
\put(109.807,15.416){\line(1,0){.9977}}
\put(111.803,15.416){\line(1,0){.9977}}
\put(113.798,15.416){\line(1,0){.9977}}
\put(115.794,15.416){\line(1,0){.9977}}
\put(117.789,15.416){\line(1,0){.9977}}
\put(119.785,15.416){\line(1,0){.9977}}
\put(121.78,15.416){\line(1,0){.9977}}
\put(123.776,15.416){\line(1,0){.9977}}
\put(125.771,15.416){\line(1,0){.9977}}
\put(127.767,15.416){\line(1,0){.9977}}
\put(129.762,15.416){\line(1,0){.9977}}
\put(131.757,15.416){\line(1,0){.9977}}
\put(133.753,15.416){\line(1,0){.9977}}
\put(135.748,15.416){\line(1,0){.9977}}
\put(137.744,15.416){\line(1,0){.9977}}
\put(139.739,15.416){\line(1,0){.9977}}
\put(141.735,15.416){\line(1,0){.9977}}
\put(143.73,15.416){\line(1,0){.9977}}
\put(145.726,15.416){\line(1,0){.9977}}
\put(147.721,15.416){\line(1,0){.9977}}
\put(149.717,15.416){\line(1,0){.9977}}
\put(151.712,15.416){\line(1,0){.9977}}
\put(153.708,15.416){\line(1,0){.9977}}
\put(155.703,15.416){\line(1,0){.9977}}
\put(157.698,15.416){\line(1,0){.9977}}
\put(159.694,15.416){\line(1,0){.9977}}
\put(161.689,15.416){\line(1,0){.9977}}
\put(163.685,15.416){\line(1,0){.9977}}
\put(165.68,15.416){\line(1,0){.9977}}
\put(167.676,15.416){\line(1,0){.9977}}
\put(169.671,15.416){\line(1,0){.9977}}
\put(171.667,15.416){\line(1,0){.9977}}
\put(136.5,64){\line(1,0){117}}
\multiput(149.109,36.359)(.065637,-.122587){7}{\line(0,-1){.122587}}
\multiput(150.028,34.643)(.065637,-.122587){7}{\line(0,-1){.122587}}
\multiput(150.947,32.927)(.065637,-.122587){7}{\line(0,-1){.122587}}
\multiput(151.866,31.211)(.065637,-.122587){7}{\line(0,-1){.122587}}
\multiput(152.785,29.495)(.065637,-.122587){7}{\line(0,-1){.122587}}
\multiput(153.704,27.778)(.065637,-.122587){7}{\line(0,-1){.122587}}
\multiput(154.623,26.062)(.065637,-.122587){7}{\line(0,-1){.122587}}
\multiput(155.542,24.346)(.065637,-.122587){7}{\line(0,-1){.122587}}
\multiput(156.461,22.63)(.065637,-.122587){7}{\line(0,-1){.122587}}
\multiput(157.38,20.913)(.065637,-.122587){7}{\line(0,-1){.122587}}
\multiput(158.299,19.197)(.065637,-.122587){7}{\line(0,-1){.122587}}
\multiput(159.218,17.481)(.065637,-.122587){7}{\line(0,-1){.122587}}
\multiput(160.136,15.765)(.065637,-.122587){7}{\line(0,-1){.122587}}
\multiput(161.055,14.049)(.065637,-.122587){7}{\line(0,-1){.122587}}
\multiput(161.974,12.332)(.065637,-.122587){7}{\line(0,-1){.122587}}
\multiput(162.893,10.616)(.065637,-.122587){7}{\line(0,-1){.122587}}
\multiput(163.812,8.9)(.065637,-.122587){7}{\line(0,-1){.122587}}
\multiput(164.731,7.184)(.065637,-.122587){7}{\line(0,-1){.122587}}
\multiput(165.65,5.468)(.065637,-.122587){7}{\line(0,-1){.122587}}
\put(177.25,37.75){\vector(-2,-3){.141}}\qbezier(187.5,63.75)(182.125,33)(157.25,21.25)
\put(182.5,31.5){$\theta$}
\end{picture}
\

Fig 10.
\

\

Now we take a closer look at the set of intervals $X_4$. For a moment we fix a particular interval $I_i\in X_4$, where $1\leq i\leq|X_4|$ and look at the unfolding sequences, corresponding to the endpoints of $I_i$.
As usually we denote the corresponding points on the $xy$ - plane as $P$ and $Q$. From fig 10. we see that $\angle CPQ=\xi-\psi-\theta$.
We remind that for all intervals $I\in X_4$ the exit triangles $PBC$ are in the same exit position and so the oriented exit angle $\theta$ is the same for all intervals under consideration.
The segment $PQ$ is an unfolding of a generalized diagonal which is uniquely characterized by $\angle CPQ$. In particular any two such segments for different intervals $I$ correspond to different generalized diagonals if the corresponding angles $\angle CPQ$ are different.
\

\

Consider now the triangle $APQ$. By lemma 3.4 there is a constant $K>0$ such that $\psi<K\phi(n+kc)$. Now we consider $X_4$ as the set of non-intersecting subintervals of $[0, \pi]$.
In this interpretation for any such a subinterval $I=[x, y]$ we have $y=\xi$ and $\phi=|I|$. Now we are exactly in the position to apply lemma 4.3 with $L=e$ and $m=K(n+kc)$ and where intervals $J_i$ have the same \textsl{right} endpoints with corresponding intervals $I_i$. As $\psi<K(n+kc)\phi$ then in terms of lemma 4.3 the corresponding angle $\xi-\psi$ belongs exactly to the interval $J_i$. By lemma 4.3 we can find at least $\frac{|X_4|}{eK(n+kc)}$ non-intersecting intervals $J_i$.
\

\

Summarizing our observations we see that we may the set $X_5\subseteq X_4$ of cardinality $|X_5|=\frac{Q_s-1-\frac{n+kc}{b}}{4aeK{(n+kc)}^5}$ such that the corresponding connecting segments $PQ$ correspond to \textbf{different} generalized diagonals of algebraic length bounded by $c$. But this in turn implies that $|X_5|\leq P_c$. By theorem 4.2 assumptions
$Q_s\geq Q_n>e^{n^{\mu}}$ and $P_c\leq e^{c^{\nu}}=e^{n^{\epsilon\nu}}$ which gives: $\frac{e^{n^{\mu}}-1-\frac{n+kc}{b}}{4aeK{(n+kc)}^5}\leq e^{n^{\epsilon\nu}}$. However as $\mu>\epsilon\nu$ we get a contradiction by taking $n$ large enough.
\end{proof}
\subsection{Gap estimates and global complexity bootstrap}
In this chapter we still assume that for some $\nu>0$ and large $n$ we have $P_n<e^{n^{\nu}}$.  As a result of the theorem 4.2 for any  $\mu>\gamma$ there exists an increasing sequence of times $n_i$ uniquely characterized by the property $Q_{n_i}<e^{n_i^{\mu}}$. We are now going to estimate the gaps $n_{i+1}-n_i$.

\begin{Theorem} Let $\nu>0$, $\mu>\gamma=\frac{-{\nu}^2+\sqrt{{\nu}^4 + 4{\nu}^2}}{2}$ and $n_i$ is a sequence characterized by the property $Q_{n_i}<e^{n_i^{\mu}}$. Then for any $\epsilon>0$ and for any $i$ large enough $n_{i+1}-n_i<n_i^{1+\epsilon}$.

\end{Theorem}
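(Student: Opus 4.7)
The plan is to argue by contradiction. Assume that some $\epsilon>0$ and infinitely many indices $i$ satisfy $n_{i+1}-n_i\geq n_i^{1+\epsilon}$. The key idea is to rerun the argument from the proof of Theorem 4.2, but \emph{localized} to the single gap $[n_i,n_i+c]$ for a short window $c$, exploiting that at the window's left endpoint we already have the upper bound $Q_{n_i}<e^{n_i^{\mu}}$ for free from the definition of $n_i$.

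First one checks that $\gamma>\nu/(1+\nu)$: substituting $\delta=\nu/(1+\nu)$ into $\delta^2+\nu^2\delta-\nu^2$ gives a negative value, so $\delta$ lies below the positive root $\gamma$. Hence $\mu(1+\nu)>\nu$, and we may fix an exponent $\tau$ with $\max(1-\mu,\,0)<\tau<\mu/\nu$, and set $c=\lceil n_i^{\tau}\rceil$. Since $\tau<1$, we have $c\ll n_i^{1+\epsilon}\le n_{i+1}-n_i$ for large $i$, so $n_i+c$ lies strictly inside the gap and therefore $Q_{n_i+c}\geq e^{(n_i+c)^{\mu}}$. Combined with $Q_{n_i}<e^{n_i^{\mu}}$ and the concavity estimate $(n_i+c)^{\mu}-n_i^{\mu}\gtrsim c\cdot n_i^{\mu-1}\sim n_i^{\tau+\mu-1}\to\infty$ (using $\tau>1-\mu$), this forces $Q_{n_i+c}/Q_{n_i}\to\infty$ along the chosen subsequence.

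Feeding the chain $\xi_{n_i},\ldots,\xi_{n_i+c}$ into Lemma~2.1 produces at least $Q_{n_i+c}-2Q_{n_i}-1\geq \tfrac12 e^{(n_i+c)^{\mu}}$ intervals of $\xi_{n_i+c}$ whose endpoint indices lie in $[n_i+1,n_i+c]$. Now apply the identical five-step filtering used in the proof of Theorem 4.2, with $n+kc$ replaced throughout by $n_i+c$: discard the intervals of length $\geq b/(n_i+c)$ via Lemma 3.2, split by orientation of endpoint indices, bucket into $O(n_i^2)$ length-classes via Lemma 4.1, fix one of $O(n_i^2)$ exit positions, and invoke Lemma 4.3 with $L=e$ and $m=K(n_i+c)$ to extract a subfamily $X_5$ whose connecting segments $PQ$ unfold to \emph{distinct} generalized diagonals of algebraic length $\leq c$. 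This yields $|X_5|\geq e^{n_i^{\mu}}/\operatorname{poly}(n_i)$, while on the other hand $|X_5|\leq P_c\leq e^{c^{\nu}}\leq e^{2n_i^{\tau\nu}}$ by the standing hypothesis $P_n<e^{n^{\nu}}$. Since $\tau\nu<\mu$, this is a contradiction for $i$ large, proving the theorem.

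The main anticipated obstacle is conceptual rather than computational: one must recognize that the Theorem 4.2 machinery does not actually require $Q_n>e^{n^{\mu}}$ at \emph{every} large $n$ --- it only needs this on the chosen window, together with an upper bound on $Q$ at the window's left endpoint. The defining property $Q_{n_i}<e^{n_i^{\mu}}$ supplies that upper bound gratis, which is precisely what makes the localization possible. Once that observation is in place, the polynomial overhead from the filtering is comfortably absorbed by the exponential gap between $n_i^{\mu}$ and $n_i^{\tau\nu}$ secured by the choice $\tau<\mu/\nu$.
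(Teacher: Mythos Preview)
Your proof is correct and follows the same overall strategy as the paper: contradiction, Lemma~2.1 on a short window, the five-step filtering of Theorem~4.2, and the final comparison $|X_5|\le P_c$. The one structural difference is that the paper inserts an extra pigeonhole step (its Lemma~4.5): it sets $k=n_i^{\mu}$, $c=n_i^{1-\mu+\epsilon}$, subdivides $[n_i,n_i+kc]$ into $k$ windows of length $c$, and locates some $s$ with $Q_{s+c}>3Q_s$, deliberately mirroring the proof of Theorem~4.2. You instead take $s=n_i$ directly, using the defining upper bound $Q_{n_i}<e^{n_i^{\mu}}$ together with $Q_{n_i+c}\ge e^{(n_i+c)^{\mu}}$ (since $n_i+c$ lies in the gap) to get $Q_{n_i+c}/Q_{n_i}\to\infty$ without any subdivision. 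This is a legitimate simplification; the paper's subdivision is inherited from the Theorem~4.2 template where no a~priori upper bound on $Q$ at the left endpoint is available, but here that bound comes for free, exactly as you point out. Your flexible choice of exponent $\tau\in(\max(1-\mu,0),\,\mu/\nu)$ contains the paper's specific choice $\tau=1-\mu+\epsilon$, and the feasibility condition $\mu>\nu/(1+\nu)$ you derive is the same one the paper checks at the end of its proof.
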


\begin{proof} We fix small enough $\epsilon>0$ and introduce the notations $k(n)=n^{\mu}$, $c(n)=n^{1-\mu+\epsilon}$. When it does not lead to ambiguity we will for brevity use the notations $k=k(n_i)$ and $c=c(n_i)$. We are going to prove the theorem by contradiction so let us now assume that there exists a subsequence of $n_i$ satisfying $n_{i+1}-n_i>n_i^{1+\epsilon}$. We still denote this subsequence by $n_i$ in order not to overabuse the notations.

\begin{Lemma} For any $i$ large enough there exists $s: n_i\leq s\leq n_i+kc$ such that $Q_{s+c}>3Q_s$.

\end{Lemma}

\begin{proof}We divide index interval $[n_i, n_i+kc]$ into $k$ subintervals of length $c$: $I_1=[n_i, n_i+c]$, $I_2=[n_i+c, n_i+2c]$, $\ldots$, $[n_i+(k-1)c, n_i+kc]$. We would like to prove that for one of the intervals $I_j, 1\leq j\leq k$: $Q_{n_i+jc}/Q_{n_i+(j-1)c}>3$. Assuming the opposite we have: $Q_{n_i+kc}/Q_{n_i}\leq 3^k$. On the other hand from theorem 4.3 assumptions we have: $Q_{n_i+kc}/Q_{n_i}>e^{{(n_i+kc)}^{\mu}-n_i^{\mu}}$. Taking the logarithm of both inequalities we obtain:  ${(n_i+kc)}^{\mu}-n_i^{\mu}<k\ln(3)$. As $(\mu+(1-\mu+\epsilon))\mu>\mu$ we get a contradiction for large enough $n_i$.
\end{proof}

At this moment we entirely repeat all the arguments in the proof of the theorem 4.2 only changing $n$ to $n_i$, and $k(n)$, $c(n)$ in the chapter 4.1 to the newly introduced $k(n_i)$, $c(n_i)$. We do not write down all these arguments because literally nothing changes and all the arguments repeat word by word until we get the set of generalized diagonals $X_5$ of cardinality $|X_5|=\frac{Q_s-1-\frac{n_i+kc}{b}}{4aeK{(n_i+kc)}^5}$ which must satisfy: $|X_5|<P_c$.
\

\

As $Q_s\geq Q_{n_i}\geq \frac{Q_{n_i+1}}{2}\geq\frac{e^{{(n_i+1)}^{\mu}}}{2}$ and $P_c\leq e^{c^{\nu}}=e^{n_i^{(1-\mu+\epsilon)\nu}}$ we have a contradiction for large enough $n_i$ if
$\mu>(1-\mu+\epsilon)\nu$. It is clear that the last condition holds for any small enough $\epsilon>0$ if $\mu>(1-\mu)\nu$ or $\mu>\frac{\nu}{1+\nu}$ which immediately follows from the easily checked inequality $\mu>\gamma=\frac{-{\nu}^2+\sqrt{{\nu}^4 + 4{\nu}^2}}{2}>\frac{\nu}{1+\nu}$.
\end{proof}

Now we combine two previous theorems and get the global complexity bootstrap.

\begin{Theorem} Assume that for some constant $\nu: 0<\nu\leq 1$ and all $n$ large enough: $P_n<e^{n^{\nu}}$. Then for any $\mu>\gamma=\frac{-{\nu}^2+\sqrt{{\nu}^4 + 4{\nu}^2}}{2}$ and all $n$ large enough $P_{n}<e^{n^{\mu}}$.
\end{Theorem}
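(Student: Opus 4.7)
The plan is to deduce this global bound directly from the two preceding theorems: Theorem 4.2 supplies a sparse sequence of times at which $Q$ is small, and Theorem 4.3 shows these times are close enough together that the bound propagates to all intermediate $n$.

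First, I may assume $\mu<\nu$, since otherwise the conclusion follows immediately from the hypothesis. Fix auxiliary exponents $\gamma<\mu_0<\mu_1<\mu$ and a parameter $\epsilon>0$ chosen small enough that $(1+\epsilon)\mu_0<\mu_1$. I would then apply Theorem 4.2 at exponent $\mu_0\in(\gamma,\nu)$ to produce, for each fixed vertex of the triangle, an infinite sequence $n_1<n_2<\cdots$ characterized by $Q_{n_i}<e^{n_i^{\mu_0}}$. Theorem 4.3, applied to the same $\mu_0$, then bounds the gaps: $n_{i+1}-n_i<n_i^{1+\epsilon}$ for all $i$ large.

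Given any large $n$, I would sandwich it as $n_i\leq n\leq n_{i+1}$. Monotonicity of $Q$ together with the subsequence property gives $Q_n\leq Q_{n_{i+1}}<e^{n_{i+1}^{\mu_0}}$. The gap estimate yields $n_{i+1}<2\,n_i^{1+\epsilon}$ for large $i$, so $n_{i+1}^{\mu_0}<2^{\mu_0}\,n_i^{(1+\epsilon)\mu_0}<2\,n^{\mu_1}$, and since $\mu>\mu_1$ the right side is dominated by $n^{\mu}$ for $n$ sufficiently large. Carrying out this argument at each of the three vertices and using the relation $P_n\leq\tfrac{3}{2}\max_{v}Q_n^{v}$ (which follows from $\sum_{v}Q_n^{v}=2P_n$) then yields the desired $P_n<e^{n^{\mu}}$; any residual multiplicative constant can be absorbed by taking $\mu_1$ slightly closer to $\mu$.

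The serious analytic content is already packaged in Theorems 4.2 and 4.3, so the only thing I need to check in this final step is the exponent arithmetic: the gap exponent $1+\epsilon$ combined with the subsequence exponent $\mu_0$ gives an intermediate-$n$ exponent $(1+\epsilon)\mu_0$, which can be pushed below any prescribed $\mu>\gamma$ by suitable choice of $\mu_0$ and $\epsilon$. I do not anticipate any real obstacle here; this is a clean monotonicity-plus-arithmetic composition of the previous two theorems.
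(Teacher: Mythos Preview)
Your proposal is correct and follows essentially the same route as the paper: invoke Theorem~4.2 at an exponent $\mu_0$ just above $\gamma$ to get the subsequence $(n_i)$, invoke Theorem~4.3 to bound the gaps by $n_i^{1+\epsilon}$, and then sandwich an arbitrary large $n$ between $n_i$ and $n_{i+1}$, pushing the exponent $(1+\epsilon)\mu_0$ below the target $\mu$. The only cosmetic differences are your explicit handling of the passage from $Q_n$ to $P_n$ via summation over vertices and your use of two auxiliary exponents $\mu_0<\mu_1$ rather than the paper's single $\gamma+\delta$.
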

\begin{proof} It is enough to prove that $Q_{n}<e^{n^{\mu}}$ for all $n$ large enough. Pick any $\epsilon>0$ small enough and then pick positive $\delta\ll\epsilon$ small enough. Consider a sequence $n_i$ corresponding to $\mu=\gamma+\delta$. By theorem 4.3 for all $i$ large enough: $n_{i+1}<n_{i}+n_{i}^{1+\delta}$. As we are able to slightly perturb $\delta$ if needed we may assume: $n_{i+1}<n_i^{1+\delta}$.
\

\

Now we for large $n$ take $i$ such that $n_i\leq n<n_{i+1}$. By monotonicity $Q_{n_i}\leq Q_n\leq Q_{n_{i+1}}$, so we get:
$Q_n\leq e^{n_{i+1}^{\gamma+\delta}}\leq e^{n_i^{(1+\delta)(\gamma+\delta)}}\leq e^{n^{(1+\delta)(\gamma+\delta)}}\leq e^{n^{\gamma+\epsilon}}$.
As $\epsilon$ can be chosen arbitrarily small the proof is complete.
\end{proof}

One can easily see that the bootstrap function $\gamma=f(\nu)=\frac{-{\nu}^2+\sqrt{{\nu}^4 + 4{\nu}^2}}{2}$ is a monotone function satisfying $0<f(\nu)<\nu$ for all $\nu>0$ and so the iterations $f^k(\nu)$ converge to $0$. In particular for any $\epsilon>0$ there exists $k$ such that $f^k(1)<\epsilon$. As for any triangle and any $n$ large enough $P_n<e^{n^1}$ then the $k$ times application of the theorem 4.4 implies our main result:

\begin{Theorem}[Weakly exponential estimate] For a typical triangle and any $\epsilon>0$ there is a constant $C>0$ such that: $P_n<Ce^{n^{\epsilon}}$.
\end{Theorem}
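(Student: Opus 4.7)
The plan is to derive the weakly exponential estimate as a direct consequence of the bootstrap Theorem~4.4, iterated finitely many times starting from a trivial exponential bound. The idea is to view Theorem~4.4 as defining a contracting recursion on the exponent, and to chase this recursion down to an arbitrarily small positive number.

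First I would fix a triangle in the full-measure set $X$ produced by Lemma~4.1 and establish a starting exponent $\nu_0 = 1$. For this I need $P_n < e^{n}$ for all $n$ large enough; this is the trivial bound, which holds for any polygonal billiard since the number of unfolding combinatorics of length $n$ is bounded by a fixed exponential in $n$ (and in fact follows, with slack, from Katok's Theorem~1.1 applied to our triangle). Thus the hypothesis of Theorem~4.4 is satisfied with $\nu_0 = 1$.

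Next I would iterate. Define $\nu_{k+1} = f(\nu_k)$ where $f(\nu) = \frac{-\nu^2 + \sqrt{\nu^4 + 4\nu^2}}{2}$. Applying Theorem~4.4 inductively gives, for each $k \geq 0$, a threshold $N_k$ such that $P_n < e^{n^{\nu_k}}$ for all $n \geq N_k$. The analytic point to check is that $\nu_k \downarrow 0$. Now $f\colon (0,\infty) \to (0,\infty)$ is continuous and monotone, and an elementary manipulation (already recorded in the remark after Theorem~4.2) shows $f(\nu) < \nu$ for every $\nu > 0$. Hence $\{\nu_k\}$ is strictly decreasing and bounded below by $0$, so it converges; its limit must be a fixed point of $f$ in $[0,\infty)$, and the unique such fixed point is $0$. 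Therefore $\nu_k \to 0$.

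Finally, given $\epsilon > 0$ I would choose $k$ large enough that $\nu_k < \epsilon$. Applying the bootstrap $k$ times yields $P_n < e^{n^{\nu_k}} < e^{n^{\epsilon}}$ for all $n \geq N_k$, and absorbing the finitely many smaller values of $n$ into a constant $C = C(\epsilon)$ gives $P_n < Ce^{n^{\epsilon}}$ on the full-measure set $X$. The main geometric obstacle is entirely encapsulated in Theorem~4.4; no further dynamical argument is needed here. The only risk in this final stage is a bookkeeping issue, namely that each application of Theorem~4.4 advances the threshold $N_k$ — but since $k$ is fixed once $\epsilon$ is fixed, the threshold remains finite and can be absorbed into $C$.
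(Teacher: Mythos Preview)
Your proposal is correct and follows essentially the same route as the paper: start from the trivial bound $P_n<e^{n}$ (so $\nu_0=1$), iterate the bootstrap function $f(\nu)=\tfrac{-\nu^2+\sqrt{\nu^4+4\nu^2}}{2}$ from Theorem~4.4, use $0<f(\nu)<\nu$ to conclude $f^k(1)\to 0$, and then absorb the finite threshold into the constant $C$. The only minor imprecision is that Theorem~4.4 yields $P_n<e^{n^\mu}$ for every $\mu>f(\nu)$ rather than for $\mu=f(\nu)$ itself, so strictly speaking one should set $\nu_{k+1}$ slightly above $f(\nu_k)$; this is harmless since $f$ is continuous and the iterates still tend to~$0$, and the paper is equally informal on this point.
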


\newpage

\end{document}